\theoremstyle{definition}
\theoremstyle{remark}
\numberwithin{equation}{section}
\newtheorem{thm}{Theorem}[section]
\newtheorem{prop}{Proposition}[section]
\newtheorem{df}{Definition}[section]
\newtheorem{lem}{Lemma}[section]
\newtheorem{hyp}{Hypothesis}[section]
\newtheorem{ex}{Example}[section]
\newtheorem{conj}{Conjecture}[section]
\newtheorem{rem}{Remark}[section]
\newenvironment{dem}{\paragraph{Proof}}
{\begin{flushright}$\Box$\end{flushright}}
\newcommand{\N}{\mathbb{N}}
\newcommand{\Z}{\mathbb{Z}}
\newcommand{\Q}{\mathbb{Q}}
\newcommand{\C}{\mathbb{C}}
\newcommand{\R}{\mathbb{R}}
\newcommand{\AK}{\mathbb{A}_{K}}
\newcommand{\AF}{\mathbb{A}_{F}}
\newcommand{\AL}{\mathbb{A}_{L}}
\newcommand{\AQ}{\mathbb{A}_{\mathbb{Q}}}
\begin{document}

\title{An automorphic variant of the Deligne conjecture}

%    Information for first author
\author{Jie LIN}
%    Address of record for the research reported here
\address{Institut de Mathématiques de Jussieu}
%    Current address
\curraddr{Case 247, doctorant, Institut de Mathématiques de Jussieu, 4, place Jussieu, 75005 Paris}
\email{jie.lin@imj-prg.fr}
%    \thanks will become a 1st page footnote.
\thanks{The author wants to thank her advisor, Michael HARRIS, for his advice and guidance.}

%    General info
\subjclass{11F67, 11F70, 14A20}
\date\today

\dedicatory{}

\keywords{Automorphic period, motivic period, $L$-function}

\begin{abstract}
In this paper we introduce an automorphic variant of the Deligne conjecture for tensor product of two motives over a quadratic imaginary field. On one hand, we define some motivic periods and rewrite the Deligne conjecture in terms of these periods. On the other hand, we define the automorphic analogue of these motivic periods and then give a purely automorphic variant of the Deligne conjecture. At last, we introduce some known results of this automorphic variant.
\end{abstract}

\maketitle

\section*{Introduction}
The goal of this paper is to formulate an automorphic variant of the Deligne conjecture and then introduce some known results

In \cite{deligne79}, P. Deligne has constructed complex invariants for motives over $\Q$ and conjectured that special values of motivic $L$-functions are related to thess invariants. Inspired by this conjecture, several results have been revealed in the automorphic setting recently.

In this paper, we are interested in the Rankin-Selberg $L$-function for automorphic pairs, i.e. the tensor product $L$-function for $GL_{n}\times GL_{n'}$ where $n$ and $n'$ are two positive integers. We may assume that $n\geq n'$. 

The first case was treated in \cite{harris97} where $n'=1$. In his article, M. Harris defined some automorphic periods for certain cuspidal representations over quadratic imaginary fields. He proved that the special values of such automorphic representation twisted by a Hecke character can be written in terms of these periods.

The next attempt is done in the year $2013$ where $n'=n-1$ with a local condition on the infinity type. The main formula is given in \cite{harrismotivic} and then simplified in \cite{lincomptesrendus}.

After more cases are studied in the author's thesis, a concise formula on the relations between special values of automorphic $L$-functions and automorphic periods is found out. We state it in Theorem \ref{main theorem} of this paper. It is natural to raise the following question: is this formula compatible with the Deligne conjecture?

The answer is yes. There are already several discussions on motivic periods and the Deligne conjecture in Harris's papers. In \cite{harrisadjoint}, he studied the Deligne conjecture for tensor product of two conjugate self-dual motives over quadratic imaginary fields. He has constructed some motivic periods and reformulated the Deligne conjecture in terms of these motivic periods.

In the author's thesis, the conjugate self-dual condition is dropped. Moreover, the formula is simplified by defining some new motivic periods. We see directly that if we identify the new motivic periods with the automorphic periods, the automorphic results we have obtained (c.f. equation \ref{equation automorphic}) are exactly the same as predicted by the Deligne conjecture (c.f. equation \ref{equation motivic}).

Hence, to show the compatibility, it remains to compare the two types of periods. We discuss this in the first half of Section $2.4$ of this paper. More details can be found in Section $9.2$ of \cite{linthesis}. Some ideas are also explained in \cite{harrismotivic}.

We remark that nothing is proved in this paper for the Deligne conjecture itself. Relation $2.2$, predicted by the Tate conjecture, is still conjectural and is very difficult to prove. In an ongoing work of the author in collaboration with Harald Grobner and Michael Harris, we are trying to prove this relation by assuming the Ichino-Ikeda conjecture.

\bigskip

This paper contains two parts. We state the motivic results in the first part. We first explain the Deligne conjecture in Section $1.1$. We then construct the motivic periods for motives over a quadratic imaginary field in Section $1.2$. We reformulate the Deligne conjecture for tensor product of two motives over quadratic imaginary fields in Section $1.3$ and simplify the formula in Section $1.4$.

The second part is devoted to automorphic results. We discuss the conjectural relations between motives and automorphic representations in Section $2.1$. We then introduce the theory of base change in Section $2.2$ which is inevitable in our method. We construct the automorphic periods in Section $2.3$ and formulate the automorphic variant of the Deligne conjecture in Section $2.4$. We introduce some known results for this automorphic variant at last. 

The motivic part is complete and self-contained. But due to limitation of space, some details and most proof are not provided in the automorphic part. We sincerely apologize for that and refer the reader to the references and forthcoming papers of the author. 

\section*{Basic Notation}

We fix an algebraic closure $\overline{\Q}\hookrightarrow \C$ of $\Q$.

We fix $K\hookrightarrow \overline{\Q}$ an embedding of a quadratic imaginary field into $\overline{\Q}$. 

We denote by $c$ the complex conjugation on $\C$. Via the fixed embedding $\overline{\Q}\hookrightarrow \C$, it can be considered as an element in $Gal(\overline{\Q}/\Q)$.

For any number field $L$, let $\AL$ be the adele ring of $L$. We denote by $\Sigma_{L}$ the set of embeddings from $L$ to $\overline{\Q}$. 

Let $E$ be a number field. Let $A$, $B$ be two elements in $E\otimes_{\Q}\C$. We say $A\sim_{E} B$ if either A=0, or $B=0$, or $B\in (E\otimes_{\Q}\C)^{\times}$ and $A/B\in E^{\times} \subset (E\otimes_{\Q}\C)^{\times}.$ We say $A\sim_{E;K} B$ if either A=0, or $B=0$, or $B\in (E\otimes_{\Q}\C)^{\times}$ and $A/B\in (E\otimes_{\Q} K)^{\times} \subset (E\otimes_{\Q}\C)^{\times}.$ 

Finally, we identify $E\otimes_{\Q}\C$ with $\C^{\Sigma_{E}}$ where $e\otimes z$ with $e\in E$, $z\in \C$, is identified with $(\sigma(e)z)_{\sigma\in\Sigma_{E}}$.

\section{Motives and the Deligne conjecture}\label{chapter motive}
\bigskip
\subsection{Motives over $\Q$}
We first summarize the basic notation and construction for the Deligne conjecture in \cite{deligne79}. 

In this article, a \textbf{motive} simply means a pure motive for absolute Hodge cycles. More precisely, a motive $M^{\#}$ over $\Q$ with coefficients in a number field $E$ is given by its Betti realization $M^{\#}_{B}$, its de Rham realization $M^{\#}_{DR}$ and its $l$-adic realizations $M^{\#}_{l}$ for all prime numbers $l$ where $M^{\#}_{B}$ and $M^{\#}_{DR}$ are finite dimensional vector space over $E$, $M^{\#}_{l}$ is a finite dimensional vector space over $E_{l}:=E\otimes_{\Q} \Q_{l}$ endowed with:
\begin{itemize}
\item $I_{\infty}: M^{\#}_{B}\otimes \C \xrightarrow{\sim} M^{\#}_{DR}\otimes \C $ as free $E\otimes_{\Q}\C$-module;
\item $I_{l}: M^{\#}_{B}\otimes \Q_{l}\xrightarrow{\sim} M^{\#}_{l}$ as free $E\otimes _{\Q} \Q_{l}$-module.

\end{itemize}

From the isomorphisms above, we see that $dim_{E}M^{\#}_{B}=dim_{E}M^{\#}_{DR}=dim_{E_{l}}M^{\#}_{l}$ and this is called the \textbf{rank} of $M^{\#}$, denoted by $\text{rank}(M)$. We have moreover:
\begin{enumerate}
\item An $E$-linear involution (infinite Frobenius) $F_{\infty}$ on $M^{\#}_{B}$ and a Hodge decomposition $M^{\#}_{B}\otimes \C=\bigoplus\limits_{p,q\in \Z}(M^{\#})^{p,q}$ as free $E\otimes \C$-module such that $F_{\infty}$ sends $(M^{\#})^{p,q}$ to $(M^{\#})^{q,p}$.

For $w$ an integer, we say $M^{\#}$ is \textbf{pure of weight} $w$ if $(M^{\#})^{p,q}=0$ for $p+q\neq w$. 

We define the \textbf{Hodge type} of $M^{\#}$ by the set $T(M^{\#})$ consisting of pairs $(p,q)$ such that $(M^{\#})^{p,q}\neq 0$. We remark that if $(p,q)$ is an element of $T(M^{\#})$, then $(q,p)$ is also contained in $T(M^{\#})$.

We define the \textbf{Hodge numbers} by $h_{p,q}:=dim_{E\otimes \C} (M^{\#})^{p,q}$ for $p,q\in\Z$. We say $M^{\#}$ is \textbf{regular} if $h_{p,q}\leq 1$ for all $p,q\in \Z$.\\

\item An $E$-rational Hodge filtration on $M^{\#}_{DR}$: $\cdots \supset (M^{\#})^{i}\supset (M^{\#})^{i+1}\supset \cdots$ which is compatible with the Hodge structure on $M^{\#}_{B}$ via $I_{\infty}$, i.e.,
\begin{equation}
\nonumber I_{\infty}(\bigoplus\limits_{p\geq i}(M^{\#})^{p,q})=(M^{\#})^{i}\otimes \C.
\end{equation}
\item A Galois action of $G_{\Q}$ on each $M^{\#}_{l}$ such that the family $\{ M_{l}^{\#}\}_{l}$ forms a compatible system of $l$-adic representations $\rho_{l}:G_{\Q} \longrightarrow GL(M^{\#}_{l})$. More precisely, for each prime number $p$, let $I_{p}$ be the inertia subgroup of a decomposition group at $p$ and $F_{p}$ be the geometric Frobenius of this decomposition group. For all $l\neq p$, the polynomial $det(1-F_{p}\mid (M^{\#}_{l})^{I_{p}})$ has coefficients in $E$ and is independent of the choice of $l$.
We can then define the local $L$-factor $L_{p}(s,M^{\#}):=det(1-p^{-s}F_{p}|(M^{\#}_{l})^{I_{p}})^{-1}\in E(p^{-s})$ by taking whatever $l\neq p$.
\end{enumerate}

\bigskip

For any fixed embedding $\sigma: E\hookrightarrow \C$, we may consider $L_{p}(s,M^{\#},\sigma)$ as a complex valued function. We define $L(s,M^{\#},\sigma)=\prod\limits_{p}L_{p}(s,M^{\#},\sigma)$. It converges for $Re(s)$ sufficiently large. It is conjectured that this $L$-function has analytic continuation and functional equation on the whole complex plane.

We can also define $L_{\infty}(s,M^{\#})$, the infinite part of the $L$-function, as in Section $5.3$ of \cite{deligne79}. We shall give the precise definition under certain hypothesis later. 

The Deligne conjecture studies the critical values of motivic $L$-functions. We state first the definition for critical points and then give a simple criteria under Hypothesis \ref{hyp1}.

\begin{df}
We say an integer $m$ is \textbf{critical} for $M^{\#}$ if neither $L_{\infty}(s,M^{\#})$ nor $L_{\infty}(1-s,\check{M})$ has a pole at $s=m$ where $\check{M^{\#}}$ is the dual of $M^{\#}$. We call $m$ a \textbf{critical point} of $M^{\#}$ and $L(m,M^{\#})$ a \textbf{critical value} of the $L$-function for $M^{\#}$.
\end{df}

A necessary condition for the existence of critical points is that the infinite Frobenius $F_{\infty}$ acts as a scalar at $(M^{\#})^{p,p}$ for every integer $p$. This condition is automatically satisfied when $M^{\#}$ has no $(p,p)$ class. In fact, this is the only case that we will meet throughout the text. There is no harm to assume from now on that:
\begin{hyp}\label{hyp1}
The motive $M^{\#}$ has no $(p,p)$-class, i.e., $(M^{\#})^{p,p}=\{0\}$ for any integer $p$. 
\end{hyp}

We denote the normalized Gamma function by $\Gamma_{\C}(s):=2(2\pi )^{-s}\Gamma(s)$ where $\Gamma$ means the normal Gamma function. If $M^{\#}$ satisfies Hypothesis \ref{hyp1}, then $L_{\infty}(s,M^{\#})$ is defined as
\begin{equation}
L_{\infty}(s,M^{\#}):=\prod\limits_{(p,q)\in T(M^{\#}),p<q}\Gamma_{\C}(s-p)^{h_{p,q}}.
\end{equation}

Since the poles of the Gamma function are the non positive integers, it is easy to deduce that:

\begin{lem} \label{critical existence}
For an motive $M^{\#}$ satisfying Hypothesis \ref{hyp1}, an integer $m$ is critical if and only if for any $(p,q)\in T(M^{\#})$ such that $p<q$, we have $p<m<q+1$. In particular, the motive $M^{\#}$ always has critical points.
\end{lem}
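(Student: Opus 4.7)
\emph{Proof plan.} The approach is to translate the critical condition into pole conditions on $L_{\infty}(s, M^{\#})$ and $L_{\infty}(1-s, \check{M}^{\#})$, read off the resulting inequality on $m$, and then verify the existence statement using purity together with Hypothesis \ref{hyp1}.

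First, since $\Gamma_{\C}(s-p) = 2(2\pi)^{-(s-p)}\Gamma(s-p)$ has simple poles exactly at integers $s \leq p$, the explicit formula (1.1) shows that $L_{\infty}(s,M^{\#})$ has a pole at $s = m$ if and only if there exists $(p,q) \in T(M^{\#})$ with $p<q$ and $m \leq p$. For the other factor, I would use that $T(\check{M}^{\#}) = \{(-p,-q) : (p,q) \in T(M^{\#})\}$ with the same Hodge multiplicities; the pairs in $T(\check{M}^{\#})$ whose first coordinate is strictly smaller than the second are precisely $(-q,-p)$ for $(p,q) \in T(M^{\#})$ with $p<q$. Applying (1.1) to $\check{M}^{\#}$ and substituting $1-s$ for $s$ gives
\begin{equation*}
L_{\infty}(1-s, \check{M}^{\#}) = \prod_{(p,q) \in T(M^{\#}),\, p<q} \Gamma_{\C}(1-s+q)^{h_{p,q}},
\end{equation*}
which has a pole at $s=m$ if and only if some such $(p,q)$ satisfies $m \geq q+1$. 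Combining the two non-pole conditions yields exactly the stated criterion $p < m < q+1$ for every $(p,q) \in T(M^{\#})$ with $p<q$.

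For the existence assertion, I would use that the motive $M^{\#}$ is pure of some weight $w$, so every $(p,q) \in T(M^{\#})$ lies on the anti-diagonal $p+q=w$. Setting $p_{0} := \max\{p : (p,q) \in T(M^{\#}),\, p<q\}$, the corresponding minimum value of $q$ over such pairs is $q_{0} = w - p_{0}$. Hypothesis \ref{hyp1} excludes $p_{0} = w/2$, hence $p_{0} < w/2$, which gives $p_{0} + 1 \leq w - p_{0} = q_{0}$. Any integer in the non-empty interval $[p_{0}+1,\, q_{0}]$ then satisfies the criterion and is therefore critical.

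The argument is essentially a bookkeeping exercise; the step requiring the most care is the reindexing of the Gamma product for $\check{M}^{\#}$, i.e.\ correctly tracking how the sign flip in the Hodge type interacts with the condition $p<q$ so as to produce the shift $1-s+q$ rather than $1-s-p$. Once that is done, the rest is immediate from the pole locus of $\Gamma_{\C}$ and the purity of $M^{\#}$.
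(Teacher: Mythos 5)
The paper offers no proof of this lemma (it asserts the statement is "easy to deduce" from the pole locations of the Gamma function), so there is nothing to compare against directly; your proof fills in exactly the bookkeeping the paper deems routine, and it is correct. The reindexing of $T(\check{M}^{\#})$ via the double sign flip to obtain $\Gamma_{\C}(1-s+q)$ is handled carefully and is the crux of the iff direction; the pole criteria $m\le p$ and $m\ge q+1$ are read off correctly. One small remark on the existence step: the inequality $p_{0}<w/2$ does not actually require Hypothesis \ref{hyp1} -- it follows immediately from purity together with the defining condition $p<q=w-p$ on the pairs entering the maximum (and since $p_{0},q_{0}$ are integers with $p_{0}<q_{0}$, you get $p_{0}+1\le q_{0}$ directly). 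Hypothesis \ref{hyp1} is what makes the displayed formula for $L_{\infty}(s,M^{\#})$ valid in the first place, by ensuring there are no $(p,p)$-classes contributing $\Gamma_{\mathbb{R}}$-factors; that is where the hypothesis is genuinely used, and it would be cleaner to attribute it there rather than in the existence argument.
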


\bigskip

The Deligne conjecture predicts a relation between critical values and the Deligne period. We now define the Deligne period.

\begin{df}
Let $M^{\#}$ be a motive satisfying Hypothesis \ref{hyp1}. We denote by $(M^{\#}_{B})^{+}$ (resp. $(M^{\#}_{B})^{-}$) the subspace of $M^{\#}_{B}$ fixed by $F_{\infty}$ (resp. $-F_{\infty}$). We set $F^{+}(M^{\#})=F^{-}(M^{\#}):=F^{\omega/2}(M)$, a subspace of $M^{\#}_{DR}$. It is easy to see that $I_{\infty}^{-1}(F^{+}(M^{\#})\otimes \C)$ equals $\bigoplus\limits_{p>q}(M^{\#})^{p,q}$.

The comparison isomorphism then induces an isomorphism:
\begin{equation}
(M^{\#}_{B})^{+}\otimes \C \hookrightarrow M^{\#}_{B}\otimes\C \xrightarrow{\sim} M^{\#}_{DR}\otimes \C\rightarrow (M^{\#}_{DR}/F^{+}(M^{\#}))\otimes \C. \end{equation}
The \textbf{Deligne period} $c^{+}(M)$ is defined to be the determinant of the above isomorphism with respect to any fixed $E$-bases of $(M^{\#}_{B})^{+}$ and $M^{\#}_{DR}/F^{+}(M^{\#})$. It is an element in $(E\otimes_{\Q}\C)^{\times}$ and is well defined up to multiplication by elements in $E^{\times}$. Similarly, we may define $c^{-}(M^{\#})$. 
\end{df}

At last, we consider $L(m,M^{\#})=L(m,M^{\#},\sigma)_{\sigma\in\Sigma_{E}}\in \C^{\Sigma_{E}}$ as an element in $E\otimes \C$.\\

\begin{conj}\label{deligne conjecture}\textbf{The Deligne conjecture}

We define $d^{\pm}:=dim_{E}((M^{\#}_{B})^{\pm})$. If $m$ is critical for $M^{\#}$ then 
\begin{equation}
L(m,M^{\#})\sim_{E} (2\pi i)^{d^{\epsilon}m}c^{\epsilon}(M^{\#})
\end{equation}
where $\epsilon$ is the sign of $(-1)^{m}$.
\end{conj}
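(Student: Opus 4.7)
The statement is Deligne's original conjecture and is open in full generality, so my proposal is a strategy rather than a complete argument. The plan is to exploit functoriality: both $L(s,M^\#)$ and $c^\pm(M^\#)$ transform predictably under direct sums, Tate twists, induction and restriction of coefficients, and (with more care) tensor products, as shown in \cite{deligne79}. This would reduce the conjecture to ``primitive'' motives for which one has a concrete geometric handle.

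First I would dispose of the classical base cases. For $M^\# = \Q(\chi)$ attached to a Dirichlet character, $c^\pm$ is essentially a Gauss sum and the conjecture follows from Hurwitz's formula for $L(m,\chi)$ in terms of generalized Bernoulli numbers. For $M^\#$ the $h^1$ of a CM elliptic curve, one combines the Chowla--Selberg formula for the Betti--de Rham comparison with Damerell's theorem, an approach extended to Hecke characters of CM fields by Shimura, Blasius, and Harris. Beyond that, the plan would be to realize $M^\#$ as a direct summand of the cohomology of a Shimura variety $\mathrm{Sh}$: then $c^\epsilon(M^\#)$ becomes the determinant of the Betti--de Rham comparison on $\mathrm{Sh}$ restricted to the motivic component, while $L(m,M^\#)$ can be written as an integral of a cohomological automorphic form against a rational test vector, via Rankin--Selberg, the doubling method, or another integral representation. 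Choosing the test vector and the cohomology classes rationally, one then tries to identify this integral with the Deligne period up to the predicted power of $2\pi i$.

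The hard part will be twofold. First, most motives are not known to arise in the cohomology of a Shimura variety, so the geometric starting point is itself conjectural --- this is precisely the author's Relation 2.2 and ultimately a shadow of the Tate conjecture. Second, even granting such a realization, matching the automorphic period with the motivic period $c^\epsilon(M^\#)$ requires a rigidity statement that the rational structures on the automorphic side and on Betti and de Rham cohomology agree up to elements of $E^\times$. Building this bridge is precisely what Section 2.4 of the present paper sets out to do on the automorphic side, and the ongoing work with Grobner and Harris aims to supply the missing identification under the Ichino--Ikeda conjecture; absent such inputs, only the low-rank cases $n'=1$ and $n'=n-1$ mentioned in the introduction have so far been handled unconditionally.
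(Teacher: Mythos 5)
The statement you were asked about is a \emph{conjecture}, not a theorem: the paper does not prove Conjecture~\ref{deligne conjecture}, and indeed the introduction states explicitly that ``nothing is proved in this paper for the Deligne conjecture itself.'' You correctly recognize this, and your sketch of the landscape --- the classical cases (Dirichlet characters via generalized Bernoulli numbers, CM elliptic curves and CM Hecke characters via Chowla--Selberg, Damerell, Shimura, Blasius, Harris), the strategy of realizing motives in the cohomology of Shimura varieties, and the two central obstacles (the conjectural Tate-type identification of $M_s(\Pi,\xi)$ with $\Lambda^{n-s}(M(\Pi)^\vee)\otimes M(\xi)$, and the rigidity needed to match rational structures) --- is an accurate reflection both of the existing literature and of the paper's own program in Section~2. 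The only small inaccuracy is your final sentence: the paper's Theorem~\ref{main theorem} records unconditional results for the \emph{automorphic variant} in several cases beyond $n'=1$ and $n'=n-1$ (including $m=1$ with $n,n'$ of the same parity, and the ``different gaps'' configuration with $n,n'$ of opposite parity); what remains conditional in all cases is the passage from the automorphic statement back to the motivic Deligne conjecture, exactly as you describe.
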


\bigskip
We remark that $d^{+}=d^{-1}$ is simply $\text{rank}(M)/2$ under Hypothesis \ref{hyp1}.

\bigskip 
\subsection{Motive over quadratic imaginary field}

Recall that $K$ is a quadratic imaginary field with fixed embedding $K\hookrightarrow \overline{\Q}$. 

Let $E$ be a number field. In this section, we shall consider a motive $M$ over $K$ with coefficients in $E$. For each embedding $K\hookrightarrow \C$, the motive $M$ has realizations as in the previous section. We keep the notation $M$ to indicate the realizations with respect to the fixed embedding $K\hookrightarrow \C$. We use $M^{c}$ to indicate the realizations respect to the complex conjugation of the fixed embedding. The infinite Frobenius gives an E-linear isomorphism $F_{\infty}: M_{B} \xrightarrow{\sim} M_{B}^{c}.$

We assume that $M$ is regular and pure of weight $\omega(M)$. We write $n$ for the rank of $M$. Since $M$ is regular, we can write its Hodge type as $\{(p_{i},q_{i})\mid 1\leq i\leq n\}$ with $p_{1}>p_{2}>\cdots>p_{n}$. We have $q_{i}=\omega(M)-p_{i}$ for all $i$. The Betti realization $M_{B}$ has a Hodge decomposition $M_{B}\otimes_{\Q}\C=\bigoplus\limits_{i=1}^{n}M^{p_{i},q_{i}}$ as $E\otimes_{\Q}\C$-module.

The Hodge type of $M^{c}$ then equals $\{(p_{i}^{c},q_{i}^{c})\mid 1\leq i\leq n\}$ where $q_{i}^{c}=\omega(M)-p_{i}^{c}$ and $p_{i}^{c}=q_{n+1-i}=\omega(M)-p_{n+1-i}$. We write in this way so that the Hodge numbers $p_{i}^{c}$ are still in decreasing order. We know $M_{B}^{c}\otimes_{\Q}\C=\bigoplus\limits_{i=1}^{n}(M^{c})^{p_{i}^{c},q_{i}^{c}}$ and $F_{\infty}$ induces $E$-linear isomorphisms: $M^{p_{i},q_{i}}\xrightarrow{\sim} (M^{c})^{p^{c}_{n+1-i},q^{c}_{n+1-i}}$.

The De Rham realization $M_{DR}$ is still a finite dimensional $E$-linear space endowed with an $E$-rational Hodge filtration $M_{DR}=M^{p_{n}}\supset M^{p_{n-1}}\supset\cdots\supset M^{p_{1}}$. The comparison isomorphism:
\begin{equation}
I_{\infty}: M_{B}\otimes_{\Q} \C\xrightarrow{\sim} M_{DR}\otimes_{\Q}\C
\end{equation}
induces compatibility isomorphisms on the Hodge decomposition of $M_{B}$ and the Hodge filtration on $M_{DR}$.

\begin{df}
For any fixed $E$-bases of $M_{B}$ and $M_{DR}$, we can extend them to $E\otimes_{\Q}\C$ bases of $M_{B}\otimes \C$ and $M_{DR}\otimes \C$ respectively. Such bases are called \textbf{$E$-rational}, or simply rational if the number field $E$ is clear. We define $\delta(M)$ to be the determinant of $I_{\infty}$ with respect to any $E$-rational bases and call it the \textbf{determinant period}. It is an element in $(E\otimes\C)^{\times}$well defined up to multiplication by elements in $E^{\times}\subset (E\otimes\C)^{\times}$.
\end{df} 

This period is defined in (1.2.2) of \cite{harris97} and in (1.2.4) of \cite{harrisadjoint}. It is an analogue of Deligne's period $\delta$ defined in ($1.7.3$) of \cite{deligne79} for motives over $\Q$.

\bigskip

Let us now fix some bases. We take $\{e_{i}\mid{1\leq i\leq n}\}$ an $E$-base of $M_{B}$. Since $F_{\infty}$ is $E$-linear on $M_{B}$, we know $\{e^{c}_{i}:=F_{\infty}e_{i}\mid 1\leq i\leq n\}$ forms an $E$-base of $M_{B}^{c}$.

Recall that $I_{\infty}$ is compatible with the Hodge structures. We have, for each $1\leq i\leq n$, an isomorphism:
\begin{equation}\label{comparison Hodge De Rham}
I_{\infty}: \bigoplus\limits_{p_{j}\geq p_{i}}M^{p_{j},q_{j}}=\bigoplus\limits_{j\leq i}M^{p_{j},q_{j}}\xrightarrow{\sim} M^{p_{i}}\otimes_{\Q}\C.
\end{equation}

Therefore, for each $i$, the comparison isomorphism $I_{\infty}$ induces an isomorphism 
\begin{equation}
M^{p_{i},q_{i}}\xrightarrow{\sim} M^{p_{i}}\otimes_{\Q}\C/ M^{p_{i-1}}\otimes_{\Q}\C.
\end{equation}
 Here we set $M^{p_{0}}=\{0\}$. 
 
 Let $\omega_{i}$ be a non zero element in $M^{p_{i},q_{i}}$ such that the image of $\omega_{i}$ by the above isomorphism is in $M^{p_{i}} (\text{mod }M^{p_{i-1}}\otimes_{\Q}\C)$. In other words, $I_{\infty}(\omega_{i})$ is equivalent to an element in $M^{p_{i}}$ modulo $M^{p_{i-1}}\otimes_{\Q}\C$. 

Note that $M^{p_{i},q_{i}}$ is a free $E\otimes \C$-module of rank 1. It follows that the family $\{\omega_{i}\}_{1\leq i\leq n}$ forms an $E\otimes\C$-base of $M_{B}\otimes \C$. Therefore the family $\{I_{\infty}(\omega_{i})\}_{1\leq i\leq n}$ forms an $E\otimes\C$-base of $M_{DR}\otimes \C$. This base is not rational, i.e. is not contained in $M_{DR}$. But by the above construction, it can pass to a rational base of $M_{DR}\otimes_{\Q}\C$ with a unipotent matrix by change of basis. Since the determinant of a unipotent matrix is always one, we can use this base to calculate $\delta(M)$. 

We define $\omega^{c}_{i}\in (M^{c})^{p_{i}^{c},q_{i}^{c}}$ similarly. We will use $\{I_{\infty}(\omega_{i}^{c})\mid 1\leq i\leq n\}$ as an $E\otimes_{\Q} \C$ base of $M_{DR}\otimes \C$ to calculate motivic periods henceforth.

\begin{df}
We know $(M^{c})^{p_{n+1-i}^{c},q_{n+1-i}^{c}}$ is a rank one free $E\otimes\C$-module and $F_{\infty}\omega_{i}$ is a non-zero element in it. Hence for each $1\leq i\leq n$ there exist a number $Q_{i}(M)\in (E\otimes\C)^{\times}$ such that
$F_{\infty}\omega_{i}=Q_{i}(M)\omega^{c}_{n+1-i}$. These numbers in $(E\otimes \C)^{\times}$ are called \textbf{motivic periods} and are well defined up to multiplication by elements in $E^{\times}$.
\end{df}

Since $F_{\infty}^{2}=Id$, we have $F_{\infty}\omega^{c}_{n+1-i}=Q_{i}(M)^{-1}\omega_{i}$. We deduce that:

\begin{lem}\label{period for Mc}
For all $1\leq i\leq n$, $Q_{i}(M^{c})\sim_{E} Q_{n+1-i}(M)^{-1}$.
\end{lem}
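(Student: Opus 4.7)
The plan is to prove this by a direct computation using the involution $F_{\infty}^{2}=\mathrm{Id}$ together with the definitions of $Q_{i}(M)$ and $Q_{i}(M^{c})$, after identifying $(M^{c})^{c}$ with $M$ in the natural way.

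First, I would unwind the defining equation for $Q_{i}(M)$. Starting from
\[
F_{\infty}\omega_{i}=Q_{i}(M)\,\omega^{c}_{n+1-i},
\]
I apply $F_{\infty}$ to both sides and use $F_{\infty}^{2}=\mathrm{Id}$ (and the fact that $Q_{i}(M)\in(E\otimes\C)^{\times}$ commutes with $F_{\infty}$) to obtain
\[
\omega_{i}=Q_{i}(M)\,F_{\infty}\omega^{c}_{n+1-i},
\qquad\text{i.e.}\qquad
F_{\infty}\omega^{c}_{n+1-i}=Q_{i}(M)^{-1}\,\omega_{i}.
\]
Reindexing with $j=n+1-i$ gives $F_{\infty}\omega^{c}_{j}=Q_{n+1-j}(M)^{-1}\,\omega_{j}$ for every $j$.

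Second, I would apply the definition of motivic periods to the motive $M^{c}$ itself. Since the Hodge type of $(M^{c})^{c}$ satisfies $p_{i}^{cc}=p_{i}$ and $q_{i}^{cc}=q_{i}$, the rank one $E\otimes\C$-modules $((M^{c})^{c})^{p_{i},q_{i}}$ and $M^{p_{i},q_{i}}$ coincide, and a basis element $\omega^{cc}_{i}$ chosen in the same $E$-rational way as in the definition preceding the lemma is $E^{\times}$-proportional to $\omega_{i}$. Thus the defining relation for $Q_{i}(M^{c})$ reads
\[
F_{\infty}\omega^{c}_{i}=Q_{i}(M^{c})\,\omega^{cc}_{n+1-i}\sim_{E}Q_{i}(M^{c})\,\omega_{n+1-i}.
\]

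Third, I would compare the two expressions for $F_{\infty}\omega^{c}_{i}$. Setting $j=i$ in the formula of the first step yields $F_{\infty}\omega^{c}_{i}=Q_{n+1-i}(M)^{-1}\omega_{i}$... wait, with $j=i$ this gives $Q_{n+1-i}(M)^{-1}\omega_{i}$; the definition matches up after noting that the index on the right-hand side of the $M^{c}$ equation is $n+1-i$, not $i$. Concretely, from step one with $j=i$ I have $F_{\infty}\omega^{c}_{i}=Q_{n+1-i}(M)^{-1}\omega_{i}$, while step two gives $F_{\infty}\omega^{c}_{i}\sim_{E}Q_{i}(M^{c})\omega_{n+1-i}$. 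Equating these and using that $\{\omega_{i}\}$ and $\{\omega_{n+1-i}\}$ are nonzero generators of distinct rank one $E\otimes\C$-modules, the correct comparison (after matching the indices carefully, by setting $j=n+1-i$ in step one instead) produces $Q_{i}(M^{c})\,\omega_{n+1-i}\sim_{E}Q_{n+1-i}(M)^{-1}\,\omega_{n+1-i}$, whence $Q_{i}(M^{c})\sim_{E}Q_{n+1-i}(M)^{-1}$.

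The only subtlety, and thus the only possible obstacle, is the canonical identification of $(M^{c})^{c}$ with $M$ and the verification that the $E$-rational choices made for $\omega^{cc}_{i}$ and $\omega_{i}$ agree up to $E^{\times}$; this is harmless because $M^{p_{i},q_{i}}$ is a rank one free $E\otimes\C$-module, so any two $E$-rational nonzero generators are proportional by an element of $E^{\times}$, which is exactly the equivalence relation $\sim_{E}$.
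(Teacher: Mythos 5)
Your approach is essentially the paper's: the paper gives this lemma as an immediate consequence of applying $F_{\infty}$ (using $F_{\infty}^{2}=\mathrm{Id}$) to the defining relation $F_{\infty}\omega_{i}=Q_{i}(M)\,\omega^{c}_{n+1-i}$, which yields $F_{\infty}\omega^{c}_{n+1-i}=Q_{i}(M)^{-1}\omega_{i}$, together with the observation (which you also make) that $\omega^{cc}_{i}\sim_{E}\omega_{i}$. So the idea is the right one and the conclusion is correct.

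However, the reindexing at the end of your first step is wrong and the confusion in the ``wait'' paragraph is never cleanly repaired. From $F_{\infty}\omega^{c}_{n+1-i}=Q_{i}(M)^{-1}\omega_{i}$, substituting $j=n+1-i$ (so $i=n+1-j$) gives
\begin{equation*}
F_{\infty}\omega^{c}_{j}=Q_{n+1-j}(M)^{-1}\,\omega_{n+1-j},
\end{equation*}
not $F_{\infty}\omega^{c}_{j}=Q_{n+1-j}(M)^{-1}\,\omega_{j}$ as you wrote. Note that this is consistent with the Hodge bidegrees: $F_{\infty}$ sends $(M^{c})^{p^{c}_{j},q^{c}_{j}}$ to $M^{q^{c}_{j},p^{c}_{j}}=M^{p_{n+1-j},q_{n+1-j}}$, which is the component containing $\omega_{n+1-j}$, not $\omega_{j}$. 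Once this is fixed, setting $j=i$ gives $F_{\infty}\omega^{c}_{i}=Q_{n+1-i}(M)^{-1}\omega_{n+1-i}$, which combined with your step two ($F_{\infty}\omega^{c}_{i}\sim_{E}Q_{i}(M^{c})\omega_{n+1-i}$) immediately yields $Q_{i}(M^{c})\sim_{E}Q_{n+1-i}(M)^{-1}$, with no further juggling of indices required. Your proposed fix (``setting $j=n+1-i$ in step one instead'') does not actually repair the formula; the error lies in the reindexed expression itself.
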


\bigskip

Now that the bases are fixed, we can write down the coefficients of certain vectors and then calculate the Deligne period. We write $\omega_{a}=\sum\limits_{i=1}^{n}A_{ia}e_{i}$, $\omega_{t}^{c}=\sum\limits_{i=1}^{n}A_{it}^{c}e^{c}_{i}$ with $A_{ia}, A_{it}^{c}\in E\otimes \C$ for all $1\leq i,a,t\leq n$.

We know $\delta(M)^{-1}=det(A_{ia})_{1\leq i,a \leq n}$. This implies that $\bigwedge_{i=1}^{n}\omega_{i}=\delta(M)^{-1}\bigwedge_{i=1}^{n}e_{i}$.

We denote by $det(M)$ the determinant motive of $M$ as in section $1.2$ of \cite{harrisadjoint}. We know $I_{\infty}(\bigwedge_{i=1}^{n}\omega_{i})$ is an $E$-base of $det(M)_{DR}$ and $\bigwedge_{i=1}^{n}e_{i}$ is an $E$-base of $det(M)_{B}$. It is easy to deduce by the definition of motivic periods that \begin{equation}
F_{\infty}(\bigwedge_{i=1}^{n}\omega_{i})=\prod\limits_{1\leq i\leq n}Q_{i}(M)\bigwedge_{i=1}^{n}\omega_{i}^{c}.
\end{equation} Therefore, we can get the following lemma:

\begin{lem}\label{determinant motive lemma}
There are relations between motivic periods:
\begin{eqnarray}
\delta(M) \sim_{E} \delta(det(M))\\
Q_{1}(det(M))\sim_{E} \prod\limits_{i=1}^{n}Q_{i}(M)
\end{eqnarray}
\end{lem}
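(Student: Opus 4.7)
The plan is to work out both identities directly from the basis-level computation already set up in the paragraph preceding the lemma, using the compatibility of $I_\infty$ and $F_\infty$ with the passage to top exterior powers.

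First I would fix convenient bases for $\det(M)$. Since $\det(M)$ has rank one, its Betti realization $\det(M)_B = \bigwedge^n M_B$ has the $E$-rational basis $\bigwedge_{i=1}^n e_i$, and the lone Hodge type of $\det(M)$ is represented, in the Hodge decomposition of $\det(M)_B \otimes \C$, by $\bigwedge_{i=1}^n \omega_i$. On the de Rham side, the element $\bigwedge_{i=1}^n I_\infty(\omega_i) = I_\infty\bigl(\bigwedge_{i=1}^n \omega_i\bigr)$ is an $E$-rational basis of $\det(M)_{DR}$: indeed, the paper has already observed that $\{I_\infty(\omega_i)\}$ differs from an $E$-rational basis of $M_{DR}$ by a unipotent change of basis, and taking top exterior powers of a unipotent matrix again gives determinant one. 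Similarly one fixes $\bigwedge_{i=1}^n e_i^c$ and $\bigwedge_{i=1}^n \omega_i^c$ on the conjugate side.

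For the first identity, I would simply unwind the relation $\bigwedge_{i=1}^n \omega_i = \delta(M)^{-1} \bigwedge_{i=1}^n e_i$ established just above the lemma. Applying $I_\infty$ and reading the resulting equation against the $E$-rational bases $\bigwedge_i e_i$ of $\det(M)_B$ and $I_\infty\bigl(\bigwedge_i \omega_i\bigr)$ of $\det(M)_{DR}$ identifies the determinant of the comparison isomorphism of $\det(M)$ with $\delta(M)$, giving $\delta(M) \sim_E \delta(\det(M))$.

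For the second identity I would compute $F_\infty\bigl(\bigwedge_{i=1}^n \omega_i\bigr)$ in two ways. On one hand, $F_\infty$ acts factor by factor on the exterior power, so using the definition $F_\infty \omega_i = Q_i(M)\, \omega^c_{n+1-i}$ gives
\begin{equation}
F_\infty\Bigl(\bigwedge_{i=1}^n \omega_i\Bigr) = \Bigl(\prod_{i=1}^n Q_i(M)\Bigr) \bigwedge_{i=1}^n \omega^c_{n+1-i} = \pm\Bigl(\prod_{i=1}^n Q_i(M)\Bigr) \bigwedge_{i=1}^n \omega^c_i,
\end{equation}
where the sign from reversing the order lies in $E^\times$ and is absorbed by $\sim_E$. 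On the other hand, by the very definition of the motivic period of the rank-one motive $\det(M)$ applied to the Hodge generators $\bigwedge_i \omega_i$ and $\bigwedge_i \omega_i^c$, we have $F_\infty\bigl(\bigwedge_i \omega_i\bigr) = Q_1(\det(M))\, \bigwedge_i \omega_i^c$. Comparing yields $Q_1(\det(M)) \sim_E \prod_{i=1}^n Q_i(M)$.

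The only subtlety I expect is the bookkeeping of $E$-rationality: one must be sure that $\bigwedge_i \omega_i^c$ really represents the Hodge-type generator for $\det(M)^c$ used in defining $Q_1(\det(M))$, and that the reordering sign and the unipotent-change-of-basis issue on the de Rham side do not introduce anything outside $E^\times$. Both of these are absorbed cleanly by the equivalence $\sim_E$, so no further estimates or identifications are needed.
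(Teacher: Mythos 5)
Your argument is correct and follows the same route as the paper: establish $\bigwedge_i\omega_i=\delta(M)^{-1}\bigwedge_i e_i$ and use $\bigwedge_i e_i$, $I_\infty\bigl(\bigwedge_i\omega_i\bigr)$ as $E$-rational generators of $\det(M)_B$, $\det(M)_{DR}$ for the first relation, then compute $F_\infty\bigl(\bigwedge_i\omega_i\bigr)$ factorwise and compare with the definition of $Q_1(\det(M))$ for the second. You are slightly more careful than the paper in flagging the reordering sign $(-1)^{n(n-1)/2}$ when rewriting $\bigwedge_i\omega^c_{n+1-i}$ as $\bigwedge_i\omega^c_i$, but as you note this lies in $E^\times$ and is harmless under $\sim_E$.
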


\bigskip

\begin{lem}\label{delta c}
For all motive $M$ as above, we have:
\begin{equation}\nonumber
\delta(M^{c})\sim_{E}(\prod\limits_{1\leq i\leq n}Q_{i})\delta(M).
\end{equation}
\end{lem}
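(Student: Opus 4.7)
The plan is to express both $\delta(M)$ and $\delta(M^c)$ in terms of the coordinate matrices $(A_{ia})$ and $(A^c_{it})$ already introduced, and then relate these two matrices column by column via the formula $F_\infty \omega_i = Q_i(M)\,\omega^c_{n+1-i}$.

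First I would recall from the setup that $\delta(M)^{-1}\sim_E \det(A_{ia})_{1\le i,a\le n}$: the rational bases $\{e_i\}$ of $M_B$ and (the unipotent translate of) $\{I_\infty(\omega_a)\}$ of $M_{DR}$ are related by the matrix $(A_{ia})^{-1}$, and unipotent base-changes are absorbed by $\sim_E$. The analogous equality $\delta(M^c)^{-1}\sim_E \det(A^c_{it})_{1\le i,t\le n}$ holds for $M^c$ using $\{e^c_i\}$ and $\{I_\infty(\omega^c_t)\}$.

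Next I would extract the desired matrix identity. Since $F_\infty$ is $E$-linear on $M_B$, its $\C$-linear extension to $M_B\otimes_\Q\C$ is $(E\otimes_\Q\C)$-linear, so applying it termwise to $\omega_a=\sum_i A_{ia} e_i$ gives
\begin{equation*}
F_\infty\omega_a \;=\; \sum_i A_{ia}\,e^c_i.
\end{equation*}
On the other hand, by the definition of the motivic periods,
\begin{equation*}
F_\infty\omega_a \;=\; Q_a(M)\,\omega^c_{n+1-a} \;=\; Q_a(M)\sum_i A^c_{i,n+1-a}\,e^c_i.
\end{equation*}
Comparing coefficients of $e^c_i$ in $M_B^c\otimes_\Q\C$ yields the entrywise equality $A_{ia}=Q_a(M)\,A^c_{i,n+1-a}$ for all $i,a$.

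Finally I would take determinants. The column $a$ of $(A_{ia})$ equals $Q_a(M)$ times the column $n+1-a$ of $(A^c_{it})$, so
\begin{equation*}
\det(A_{ia}) \;=\; \Bigl(\prod_{a=1}^n Q_a(M)\Bigr)\,\mathrm{sgn}(w_0)\,\det(A^c_{it}),
\end{equation*}
where $w_0$ is the reversal permutation $a\mapsto n+1-a$. Its sign $\pm 1$ lies in $E^\times$ and so is absorbed by $\sim_E$. Substituting back via the first step produces $\delta(M)^{-1}\sim_E (\prod_i Q_i(M))\,\delta(M^c)^{-1}$, which is the claim after inversion. I expect no real obstacle here beyond careful bookkeeping: the only subtle point is making sure the extension of $F_\infty$ to $M_B\otimes_\Q\C$ is $(E\otimes\C)$-linear rather than antilinear, so that the scalars $A_{ia}\in E\otimes\C$ can be pulled outside of $F_\infty$.
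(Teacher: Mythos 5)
Your proof is correct and is essentially the paper's own (first, one-line) argument fully spelled out. The paper's proof simply reads ``This follows directly from equation (\ref{coefficient relation})'', where that equation is $A_{it}^{c}=Q_{n+1-t}^{-1}A_{i,n+1-t}$ — exactly the entrywise identity you derive by applying the $(E\otimes\C)$-linear extension of $F_\infty$ to $\omega_a=\sum_i A_{ia}e_i$ and comparing with $F_\infty\omega_a=Q_a\,\omega^c_{n+1-a}$. Taking determinants and absorbing the sign of the column-reversal into $E^\times$ then gives the statement. The only difference is cosmetic: the paper states (\ref{coefficient relation}) a bit later, inside the proof of Proposition \ref{deligne period tensor product}, and also offers an alternative second proof reducing to the rank-one case via Lemma \ref{determinant motive lemma}; you do neither of these, but your self-contained derivation is exactly the intended content. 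Your flagged ``subtle point'' about $(E\otimes\C)$-linearity of $F_\infty$ is indeed the convention the paper uses — this is visible both in the axiom that the $\C$-linear extension of $F_\infty$ sends $(M^\#)^{p,q}$ to $(M^\#)^{q,p}$, and in the paper's own alternative proof, where $\delta(M)^{-1}\in(E\otimes\C)^\times$ is pulled out of $F_\infty$.
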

\begin{dem}
This follows directly from equation (\ref{coefficient relation}). 

One can also prove this with help of Lemma \ref{determinant motive lemma}. In fact, by Lemma \ref{determinant motive lemma}, we may assume that $n=1$. We take $\omega\in M_{DR}$, $\omega^{c} \in M^{c}_{DR}$ and $e\in M_{B}$ as before. Then $e=\delta(M)\omega$ and $e^{c}=\delta(M^{c})\omega^{c}$ where $e^{c}=F_{\infty}e$.

By definition of motivic period, we have $F_{\infty}\omega=Q_{1}(M)\omega^{c}$ and then $\omega^{c}=Q_{1}(M)^{-1}F_{\infty}\omega=Q_{1}(M)^{-1}F_{\infty}(\delta(M)^{-1}e)=Q_{1}(M)^{-1}\delta(M)^{-1}e^{c}$. It follows that $\delta(M^{c})\sim_{E}Q_{1}(M)\delta(M)$ as expected.
\end{dem}

\bigskip

\begin{ex}\textbf{Tate motive}
\text{}

Let $\Z(1)_{K}$ be the extension of $\Z(1)$ from $\Q$ to $K$. It is a motive with coefficients in $K$. As in section $3.1$ of \cite{deligne79}, $\Z(1)_{K,B}=H_{1}(\mathbb{G}_{m,K})\cong K$ and $\Z(1)_{K,DR}$ is the dual of $H^{1}_{DR}(\mathbb{G}_{m,K})$ with generator $\cfrac{dz}{z}$. Therefore the comparison isomorphism $\Z(1)_{K,B}\otimes \C\cong K\otimes \C\rightarrow \Z(1)_{K,DR}\otimes \C\cong K\otimes \C$ sends $K$ to $\oint\cfrac{dz}{z}K=(2\pi i )K$. We have $\delta(\Z(1)_{K})\sim_{K} 2\pi i$. 

In general, let $M$ be a motive over $K$ with coefficients in $E$ of rank $r$. We have 
\begin{equation}\label{Tate lemma}
\delta(M(n))\sim_{E;K} (2\pi i)^{nr}\delta(M).
\end{equation}
\end{ex}

\bigskip \subsection{The Deligne period for tensor product of motives}

We now consider the Deligne period for tensor product of motives over $K$. 

Let $E$ and $E'$ be two number fields. Let $M$ be a regular motive over $K$ (with respect to the fixed embedding) with coefficients in $E$ pure of weight $\omega(M)$. Let $M'$ be a regular motive over $K$ with coefficients in $E'$ pure of weight $\omega(M')$. We write $n$ for the rank of $M$ and $n'$ for the rank of $M'$.

We denote by $R(M\otimes M')$ the restriction from $K$ to $\Q$ of the motive $M\otimes M'$. It is a motive pure of weight $\omega:=\omega(M)+\omega(M')$ with Betti realization $M_{B}\otimes M'_{B}\oplus M^{c}_{B}\otimes M'^{c}_{B}$ and De Rham realization $M_{DR}\otimes M'_{DR}\oplus M^{c}_{DR}\otimes M'^{c}_{DR}$.

We denote the Hodge type of $M$ by $\{(p_{i},\omega(M)-p_{i})\mid 1\leq i\leq n\}$ with $p_{1}>\cdots>p_{n}$ and the Hodge type of $M'$ by $\{(r_{j},\omega(M')-r_{j})\mid 1\leq j\leq n\}$ with $r_{1}>r_{2}>\cdots>r_{n'}$. As before, we define $p_{i}^{c}=\omega(M)-p_{n+1-i}$ and $r_{j}^{c}=\omega(M')-r_{n'+1-j}$. They are indices for Hodge type of $M^{c}$ and $M'^{c}$ respectively.

We assume that $R(M\otimes M')$ satisfies Hypothesis \ref{hyp1}, i.e., it has no $(w/2,w/2)$ class. In other words, $p_{a}+r_{b}\neq \cfrac{\omega}{2}$ and then $p_{t}^{c}+r_{u}^{c}\neq \cfrac{\omega}{2}$ for all $1\leq a,t\leq n$, $1\leq b,u\leq n'$.

\begin{rem}
Recall that a necessary condition for a motive $M^{\#}$ to have critical points is that for any integer $p$, the infinity Frobenius $F_{\infty}$ acts as a scalar on $(M^{\#})^{p,p}$. We observe that $R(M\otimes M')^{p,p}=(M\otimes M')^{p,p}\oplus (M^{c}\otimes M'^{c})^{p,p}$ and $F_{\infty}$ interchanges the two factors. Hence $F_{\infty}$ can not be a scalar on $R(M\otimes M')^{p,p}$ unless the latter is zero. Therefore, if the motive $R(M\otimes M')$ has critical points then it has no $(p,p)$ class for any $p$. The converse is also true due to Lemma \ref{critical existence}.
\end{rem}

As in the above section, we take $\{e_{i}\mid 1\leq i \leq n\}$ an $E$-base of $M_{B}$ and define $\{e^{c}_{i}:=F_{\infty}e_{i}\mid 1\leq i\leq n\}$ which is an $E$-base of $M^{c}_{B}$. Similarly, we take $\{f_{j}\mid 1\leq j\leq n'\}$ an $E'$-base of $M'_{B}$ and define $f^{c}_{j}:=F_{\infty}f_{j}$ for $1\leq j\leq n'$.

We also take $\omega_{i}\in M^{p_{i},\omega(M)-p_{i}}$, $(\omega^{c}_{i})\in (M^{c})^{p^{c}_{i},\omega(M)-p^{c}_{i}}$ for $1\leq i\leq n$ as in previous section and $\eta_{j}\in M^{r_{j},\omega(M')-r_{j}}$, $\eta^{c}_{j}\in (M'^{c})^{r^{c}_{j},\omega(M')-r^{c}_{j}}$ for $1\leq j\leq n'$ similarly.

\bigskip

Recall that the motivic periods are defined by \begin{equation}\label{Qi}\tag{P}
F_{\infty}\omega_{i}=Q_{i}\omega^{c}_{n+1-i}, F_{\infty}\mu_{j}=Q'_{j}\mu^{c}_{n'+1-j}
\end{equation}
for $1\leq i\leq n$ and $1\leq j\leq n'$.

The aim of this section is to calculate the Deligne period for $R(M\otimes M')$ in terms of these motivic periods.

\begin{rem}
If we define a paring $(M_{B}\otimes\C)\otimes (M_{B}\otimes\C)\rightarrow \C$ such that $<\omega_{i},\omega_{n+1-i}^{c}>=1$ and $<\omega_{i},\omega_{n+1-j}^{c}>=0$ for $j\neq i$ then $Q_{i}=<\omega_{i},F_{\infty}\omega_{i}>$. 
\end{rem}

\bigskip

We write $M^{\#}=R(M\otimes M')$. We define two sets $A:=\{(a,b) \mid p_{a}+r_{b}> \cfrac{\omega}{2}\}$ and $T:=\{(t,u) \mid p^{c}_{t}+r^{c}_{u}> \cfrac{\omega}{2}\}=\{(t,u) \mid p_{n+1-t}+r_{n'+1-u}< \cfrac{\omega}{2}\}$. 

\bigskip

\begin{rem} 
It is easy to see that \begin{equation}\label{AT}
(t,u)\in T\text{ if and only if }(n+1-t,n'+1-u)\notin A.
\end{equation}
\end{rem}

\bigskip

\begin{prop}\label{deligne period tensor product}
Let $M$, $M'$ be as before. We assume that $M\otimes M'$ has no $(\omega/2,\omega/2)$-class. 
We then have 
\begin{eqnarray} \label{first motivic equation}
c^{+}(R(M\otimes M')) &\sim_{EE'} &\left(\prod_{(t,u)\notin  T}Q_{n+1-t}(M)Q_{n'+1-u}(M')\right)\delta(M\otimes M')\nonumber\\&\sim_{EE'} &\left(\prod_{(t,u)\in  A}Q_{t}(M)Q_{u}(M')\right)\delta(M\otimes M').
\end{eqnarray} 

\end{prop}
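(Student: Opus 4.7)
The plan is to compute $c^{+}(M^{\#})$ (where $M^{\#}:=R(M\otimes M')$) as the determinant of the composition
\[
(M^{\#}_{B})^{+}\otimes\C\hookrightarrow M^{\#}_{B}\otimes\C\xrightarrow{I_{\infty}}M^{\#}_{DR}\otimes\C\twoheadrightarrow (M^{\#}_{DR}/F^{+})\otimes\C
\]
in a well-chosen non-rational intermediate basis, and then to convert back to $EE'$-rational bases at the end. Since $F_{\infty}$ swaps the two summands $M_{B}\otimes M'_{B}$ and $M^{c}_{B}\otimes M'^{c}_{B}$, an $EE'$-rational basis of $(M^{\#}_{B})^{+}$ is given by $v_{a,b}:=e_{a}\otimes f_{b}+e^{c}_{a}\otimes f^{c}_{b}$ for $1\leq a\leq n$ and $1\leq b\leq n'$.

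The idea is to replace $\{v_{a,b}\}$ by the auxiliary $EE'\otimes\C$-basis
\[
\widetilde{v}_{a,b}:=\omega_{a}\otimes\eta_{b}+Q_{a}(M)Q_{b}(M')\,\omega^{c}_{n+1-a}\otimes\eta^{c}_{n'+1-b},
\]
which is $F_{\infty}$-invariant by (\ref{Qi}) and $F_{\infty}^{2}=Id$. Projection onto the first summand sends $v_{a,b}\mapsto e_{a}\otimes f_{b}$ and $\widetilde{v}_{a,b}\mapsto\omega_{a}\otimes\eta_{b}$, so the change-of-basis determinant from $\{v_{a,b}\}$ to $\{\widetilde{v}_{a,b}\}$ in $(M^{\#}_{B})^{+}\otimes\C$ equals the one from $\{e_{a}\otimes f_{b}\}$ to $\{\omega_{a}\otimes\eta_{b}\}$ in $M_{B}\otimes M'_{B}\otimes\C$. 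From $\bigwedge_{a}\omega_{a}=\delta(M)^{-1}\bigwedge_{a}e_{a}$ and its analogue for $M'$ this determinant is $\delta(M)^{-n'}\delta(M')^{-n}$; the same Kronecker-product reasoning applied to $I_{\infty}(e_{a}\otimes f_{b})$ expanded in the $I_{\infty}(\omega_{a'}\otimes\eta_{b'})$ basis gives $\delta(M\otimes M')\sim_{EE'}\delta(M)^{n'}\delta(M')^{n}$.

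The no-$(\omega/2,\omega/2)$-class hypothesis identifies $F^{+}(M^{\#})\otimes\C$, via $I_{\infty}$, with
\[
\bigoplus_{(a,b)\in A}M^{p_{a},q_{a}}\otimes M'^{r_{b},s_{b}}\;\oplus\;\bigoplus_{(t,u)\in T}(M^{c})^{p^{c}_{t},q^{c}_{t}}\otimes(M'^{c})^{r^{c}_{u},s^{c}_{u}}.
\]
Hence $\{I_{\infty}(\omega_{a}\otimes\eta_{b})\}_{(a,b)\notin A}\cup\{I_{\infty}(\omega^{c}_{t}\otimes\eta^{c}_{u})\}_{(t,u)\notin T}$ projects to an $EE'\otimes\C$-basis of $(M^{\#}_{DR}/F^{+})\otimes\C$ differing from an $EE'$-rational basis by a unipotent matrix, exactly as in the Section~1.2 calculation of $\delta(M)$. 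Using (\ref{AT}), for each $(a,b)$ exactly one summand of $I_{\infty}(\widetilde{v}_{a,b})$ survives modulo $F^{+}$: $I_{\infty}(\omega_{a}\otimes\eta_{b})$ when $(a,b)\notin A$, and $Q_{a}(M)Q_{b}(M')\,I_{\infty}(\omega^{c}_{n+1-a}\otimes\eta^{c}_{n'+1-b})$ when $(a,b)\in A$. In these bases the comparison map is a signed permutation times a diagonal matrix with entries $1$ or $Q_{a}(M)Q_{b}(M')$, of determinant $\prod_{(a,b)\in A}Q_{a}(M)Q_{b}(M')$ modulo $(EE')^{\times}$.

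Combining with the source change-of-basis factor $\delta(M)^{-n'}\delta(M')^{-n}$, I would conclude
\[
c^{+}(M^{\#})\sim_{EE'}\Bigl(\prod_{(a,b)\in A}Q_{a}(M)Q_{b}(M')\Bigr)\delta(M)^{n'}\delta(M')^{n}\sim_{EE'}\Bigl(\prod_{(a,b)\in A}Q_{a}(M)Q_{b}(M')\Bigr)\delta(M\otimes M'),
\]
which is the second expression in (\ref{first motivic equation}); the first expression follows by the reindexing $(a,b)\leftrightarrow(n+1-t,n'+1-u)$ through (\ref{AT}). The main obstacle is the careful bookkeeping needed to make every intermediate $\sim_{EE'}$ legitimate: verifying that the near-rational target basis differs from a genuinely $EE'$-rational basis by a unipotent matrix in a suitable lexicographic order compatible with both Hodge filtrations, and that the projection argument reduces the source change of basis to a clean Kronecker-product determinant.
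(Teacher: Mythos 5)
Your proof is correct and takes essentially the same route as the paper: both hinge on applying $(1+F_{\infty})$ to the $\omega_{a}\otimes\eta_{b}$ to produce a convenient non-rational basis of $(M^{\#}_{B})^{+}\otimes\C$, on the complementarity $(t,u)\notin T\Leftrightarrow(n+1-t,n'+1-u)\in A$, on the unipotence of the change to the near-rational quotient basis, and on the Kronecker-product identity $\delta(M\otimes M')\sim\delta(M)^{n'}\delta(M')^{n}$. The only cosmetic difference is in where the factors $Q_{a}(M)Q_{b}(M')$ are booked: the paper keeps the source basis $\{(1+F_{\infty})\omega_{a}\otimes\mu_{b}\}_{(a,b)\notin A}\cup\{(1+F_{\infty})\omega^{c}_{t}\otimes\mu^{c}_{u}\}_{(t,u)\notin T}$ so that the comparison map becomes the identity and the $Q$'s appear through the coefficient relation $A^{c}_{it}=Q_{n+1-t}^{-1}A_{i,n+1-t}$ inside the change-of-basis matrix $Mat_{1}$, while you keep the source basis $\{\widetilde{v}_{a,b}\}$ indexed by all $(a,b)$ so that the $Q$'s appear as a diagonal in the comparison map itself; these differ only by the diagonal rescaling $(1+F_{\infty})\omega^{c}_{t}\otimes\mu^{c}_{u}=(Q_{n+1-t}Q'_{n'+1-u})^{-1}\widetilde{v}_{n+1-t,n'+1-u}$.
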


\begin{dem}

For simplification of notation, we identify $\omega_{i}\in M_{B}\otimes \C$ with $I_{\infty}(\omega_{i})\in M_{DR}\otimes \C$. Similarly, we identify $\omega^{c}$, $\mu_{j}$, $\mu_{j}^{c}$ with their image under $I_{\infty}$ in the following.

We fixe bases for $M^{+}_{B}$ and $M^{\#}_{DR}/F^{+}(M^{\#})$ now. For $M^{+}_{B}$, we know $$\{ e_{i}\otimes f_{j}+e^{c}_{i}\otimes f^{c}_{j}\mid 1\leq i\leq n, 1\leq j\leq n'\}$$ forms an $EE'$-base. 

To fix a base for $(M^{\#}_{DR} /F^{+}(M^{\#}))\otimes \C=M^{\#}_{DR}\otimes \C /F^{+}(M^{\#})\otimes \C$, we first recall that \begin{equation}\nonumber
I_{\infty}(\bigoplus_{p^{\#}>\frac{\omega}{2}}(M^{\#})^{p^{\#}},\omega-p^{\#})=F^{+}(M^{\#})\otimes \C.
\end{equation}
 
Moreover, we know that $\bigoplus_{p^{\#}>\frac{\omega}{2}}(M^{\#})^{p^{\#},\omega-p^{\#} }$ equals
\begin{equation}\nonumber
(\bigoplus_{a,b\in A}(M)^{p_{a},\omega(M)-p_{a}}\otimes (M')^{r_{b},\omega(M')-r_{b}})\oplus (\bigoplus_{t,u\in T}(M^{c})^{p_{t}^{c},\omega(M)-p_{t}^{c}}\otimes (M'\text{}^{c})^{r_{u}^{c},\omega(M')-r_{u}^{c}}).
\end{equation}

Therefore, the family 
\begin{equation}
\mathcal{B}:= \{ \omega_{a}\otimes \mu_{b}, \omega^{c}_{t}\otimes \mu^{c}_{u} \mod F^{+}(M^{\#})\otimes \C\mid (a,b)\notin A, (t,u)\notin T\}
\end{equation}
 is an $E\otimes\C$ base of $M^{\#}_{DR}\otimes \C /F^{+}(M^{\#})\otimes\C$. This base is not rational but can change to a rational base with a unipotent matrix for change of basis as we have seen before. Therefore we can use this base to calculate the Deligne period.

Note that $F_{\infty}$ is an endomorphism on $M^{\#}_{B}\otimes \C$ and $M^{\#}_{DR}\otimes \C$. For any $\phi\in M^{\#}_{B}\otimes \C$ or $M^{\#}_{DR}\otimes \C$, we write $(1+F_{\infty})\phi:=\phi+F_{\infty}(\phi)$.

If $(a,b)\notin A$ then $(n+1-a,n'+1-b)\in T$ by (\ref{AT}). Along with (\ref{Qi}), we know that $ F_{\infty}(\omega_{a}\otimes \mu_{b})=Q_{a}Q'_{b}\omega_{n+1-a}^{c}\mu_{n'+1-b}^{c} \in F^{+}(M^{\#})\otimes \C$. Similarly, $F_{\infty}( \omega^{c}_{t}\otimes \mu^{c}_{u}) \in F^{+}(M^{\#})\otimes \C$ for all $(t,u)\notin T$. 

We have deduced that
\begin{equation}
 \mathcal{B}=\{(1+F_{\infty})\omega_{a}\otimes \mu_{b},(1+F_{\infty}) \omega^{c}_{t}\otimes \mu^{c}_{u}\mod F^{+}(M^{\#}\otimes\C)\mid (a,b)\notin A, (t,u)\notin T)\}.
 \end{equation}

We take $A_{i,t}$, $B_{j,b}$, $A_{i,t}^{c}$, $B_{j,t}^{c}\in E\otimes \C$ such that $\omega_{a}=\sum\limits_{i=1}^{n}A_{ia}e_{i}$, $\omega_{t}^{c}=\sum\limits_{i=1}^{n}A_{it}^{c}e^{c}_{i}$, $\mu_{b}=\sum\limits_{j=1}^{n'}B_{jb}f_{j}$, $\mu_{u}=\sum\limits_{j=1}^{n'}B^{c}_{ju}f^{c}_{j}$ for $1\leq a,t\leq n$ and $1\leq b,u\leq n'$. 

We then have 
\begin{eqnarray}\nonumber &&(1+F_{\infty})\omega_{a}\mu_{b}=(1+F_{\infty})\sum\limits_{i,j}A_{ia}B_{jb}e_{i}\otimes f_{j}=\sum\limits_{i,j}A_{ia}B_{jb}(e_{i}\otimes f_{j}+e^{c}_{i}\otimes f^{c}_{j})\\
&\text{and }& (1+F_{\infty})\omega^{c}_{t}\omega^{c}_{u}=(1+F_{\infty})\sum\limits_{i,j}A^{c}_{it}B^{c}_{ju}e_{i}^{c}\otimes f_{j}^{c}=\sum\limits_{i,j}A^{c}_{it}B^{c}_{ju}(e_{i}\otimes f_{j} +e_{i}^{c}\otimes f_{j}^{c})\nonumber.
\end{eqnarray}

Up to multiplication by elements in $(EE')^{\times}$, the Deligne period then equals the inverse of the determinant of the matrix 
\begin{equation}\nonumber Mat_{1}:=\left(A_{ia}B_{jb}, A^{c}_{it}B^{c}_{ju}\right)\end{equation} with $1\leq i\leq n, 1\leq j\leq n'$, $(a,b)\notin A$, $(t,u)\notin T$.

\bigskip

By the relation \ref{Qi}, we have $F_{\infty}\omega_{n+1-t}=Q_{n+1-t}\omega^{c}_{t}$. We get 
\begin{equation}
\sum\limits_{i=1}^{n}A_{i,n+1-t}e^{c}_{i}=Q_{n+1-t}\omega^{c}_{t}=Q_{n+1-t}\sum\limits_{i=1}^{n}A_{it}^{c}e^{c}_{i} \nonumber\end{equation}
Therefore, for all $i,j$, we obtain,
\begin{equation}\label{coefficient relation}
A_{it}^{c}=(Q_{n+1-t})^{-1}A_{i,n+1-t}, B_{ju}^{c}=(Q'_{n'+1-u})^{-1}B_{j,n'+1-u} .
\end{equation}

We then deduce that $A^{c}_{it}B^{c}_{ju}=(Q_{n+1-t})^{-1}(Q'_{n'+1-u})^{-1}A_{i,n+1-t}B_{j,n'+1-u}$.

Thus the inverse of the Deligne period:
\begin{equation}\nonumber
c^{+}(R(M\otimes M'))^{-1}\sim_{E(M^{\#})} det(Mat_{1})=\prod\limits_{(t,u)\notin T}((Q_{n+1-t})^{-1}(Q'_{n'+1-u})^{-1})\times det(Mat_{2})
\end{equation} where $Mat_{2}=\left(A_{ia}B_{jb},A_{i,n+1-t,j,n'+1-u}\right)$ with $1\leq i \leq n, 1\leq j\leq n'$, $(a,b)\notin A$ and $(t,u)\notin T$.

\bigskip

Recall that $(t,u)\notin T$ if and only if $(n+1-t,n'+1-u)\in A$. Therefore the index $(n+1-t,n'+1-u)$ above runs over the pairs in $A$. We see that $Mat_{2}=(A_{ia}B_{jb})$ with both $(i,j)$ and $(a,b)$ runs over all the pair in $\{1,2,\cdots,n\}\times \{1,2,\cdots,n'\}$. It is noting but $(A_{ia})_{1\leq i,a \leq n}\otimes (B_{jb})_{1\leq j,b\leq n'}$.

Let us recall the definition of $A_{ia}$. It is the coefficients with respect to the chosen rational bases for the inverse of the comparison isomorphism $M_{B}\otimes \C\rightarrow M_{DR}\otimes \C$. 

Therefore $(A_{ia})_{1\leq i,a \leq n}\otimes (B_{jb})_{1\leq j,b\leq n'}$ is the coefficient matrix of the inverse of the comparison isomorphism \begin{equation}
(M\otimes M')_{B}\otimes \C\rightarrow (M\otimes M')_{DR}\otimes \C.
\end{equation}
 Finally, we get $det((A_{ia})\otimes (B_{jb}))=\delta(M\otimes M')^{-1}$ which terminates the proof.

\end{dem}

\bigskip

 \subsection{Further simplification}

We are going to simplify equation (\ref{first motivic equation}) in this section.

Firstly, it is easy to see that \begin{equation}
\delta(M\otimes M')\sim_{EE'}\delta(M)^{n'}\delta(M')^{n}.
\end{equation}

Secondly, it is already pointed out in Lemma $1.4.2$ of \cite{harrisadjoint} that the set $A$ is a \textit{tableau}, i.e., if $(t,u)\in A$ then for any $t'<t$ and $u'<u$, the pair $(t',u')$ is also in $A$. It is natural to consier some new motivic periods 
\begin{equation}\label{def Qleq}
Q_{(j)}(M):=Q_{1}(M)Q_{2}(M)\cdots Q_{j}(M)\text{ for }1\leq j\leq n
\end{equation} and $Q_{(0)}(M)=1$. We define $Q_{(k)}(M')$ similarly for $0\leq k\leq n'$. We should be able to rewrite equation (\ref{first motivic equation}) in terms of these motivic periods.

In fact, these new motivic periods fit in the automorphic setting in the sense that the automorphic analogue can be defined geometrically. We refer to equation \ref{compare of two periods 1} and equation \ref{compare of two periods 2} for more details.\\

We turn back to the simplification of equation (\ref{first motivic equation}). We need to find out the power for each $Q_{(j)}$. This is a purely combinatorial problem. When the motives are associated to automorphic representations, the precise powers are worked out in \cite{linthesis}. We state a general definition here.

\begin{df}
The sequence $-r_{n'}>-r_{n'-1}>\cdots>-r_{1}$ is split into $n+1$ parts by the numbers $p_{1}-\cfrac{\omega}{2}>p_{2}-\cfrac{\omega}{2}>\cdots>p_{n}-\cfrac{\omega}{2}$. We denote the length of each parts by $sp(i,M;M')$, $0\leq i\leq n$ and call them the \textbf{split indices} for motivic pair.
\end{df}

We can define $sp(j,M';M)$ for $0\leq j\leq n'$ symmetrically. It is easy to see that:
\begin{lem} \label{sum of split index}
The split indices satisfy:
\begin{enumerate}
\item $\sum\limits_{i=0}^{n} sp(i,M;M') =n' =rank(M')$;
\item $sp(i,M;M')=sp(n-i,M^{c};M'\text{}^{c})$ for all $0\leq i\leq n$.
\end{enumerate}
\end{lem}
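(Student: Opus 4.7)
The plan is to dispatch the two parts separately.

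Part (1) follows immediately from the definition: the $n$ split values $p_1-\omega/2>\cdots>p_n-\omega/2$ cut the decreasing sequence $-r_{n'}>\cdots>-r_1$ into $n+1$ consecutive (possibly empty) pieces, and these pieces partition a set of size $n'$, so the cardinalities sum to $n'$.

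For Part (2), the key reformulation is that, after adding $\omega/2$ to both sides, the condition ``$-r_l$ lies between $p_i-\omega/2$ and $p_{i+1}-\omega/2$'' becomes ``$p_i+r_l>\omega/2>p_{i+1}+r_l$''. Thus for $1\le i\le n-1$,
$$
sp(i,M;M') \;=\; \bigl|\{\,l : p_i + r_l > \omega/2 > p_{i+1} + r_l\,\}\bigr|,
$$
with the natural boundary conventions at $i=0$ and $i=n$, where one of the two inequalities is absent.

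To compare this with $sp(n-i,M^c;M'^c)$, I substitute the formulae $p^c_k=\omega(M)-p_{n+1-k}$ and $r^c_l=\omega(M')-r_{n'+1-l}$ recalled in Section~1.3, together with $\omega(M^c)+\omega(M'^c)=\omega$, to obtain
$$
p^c_k + r^c_l \;=\; \omega - p_{n+1-k} - r_{n'+1-l}.
$$
Consequently, $p^c_k+r^c_l>\omega/2$ if and only if $p_{n+1-k}+r_{n'+1-l}<\omega/2$: both inequalities flip. Plugging in $k=n-i$ and $k=n-i+1$ for the two relevant split points on the dual side, and applying the involution $l\mapsto l':=n'+1-l$ on $\{1,\dots,n'\}$, transforms the defining set for $sp(n-i,M^c;M'^c)$ bijectively into the defining set for $sp(i,M;M')$, as required.

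The main thing to watch is bookkeeping at the boundary values $i=0$ and $i=n$, where only one of the two bounds is present; but under the natural convention (parts indexed left to right, from the largest $-r_l$'s to the smallest), these cases are handled by exactly the same substitution and present no genuine obstacle. The conceptual content of the argument is just the duality $(k,l)\leftrightarrow(n+1-k,n'+1-l)$ combined with the weight-reflection $M\rightsquigarrow M^c$ of the Hodge numbers.
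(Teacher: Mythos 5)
The paper states this lemma without proof---it is introduced by ``It is easy to see that''---so there is no argument of the paper's own to compare against. Your proof is correct and is precisely the routine verification the paper has in mind: part~(1) is the observation that the $n$ cut points partition a set of size $n'$, and part~(2) is the index substitution $(k,l)\mapsto(n+1-k,n'+1-l)$ combined with the reflection $p^c_k+r^c_l=\omega-p_{n+1-k}-r_{n'+1-l}$, which reverses the inequality defining each bucket. You correctly note that the boundary cases $i=0$ and $i=n$ (where only one threshold inequality appears) go through by the same substitution, and it is worth remarking that the standing hypothesis that $R(M\otimes M')$ has no $(\omega/2,\omega/2)$-class guarantees $p_a+r_b\neq\omega/2$ for all $a,b$, so no $-r_l$ ever coincides with a threshold $p_i-\omega/2$ and the split into buckets is unambiguous; your argument implicitly relies on this.
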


Recall that $A=\{(a,b) \mid p_{a}+r_{b}> \cfrac{\omega}{2}\}$. For fixed $t$, we have \begin{equation}\#\{u\mid (t,u)\in A \}=\#\{u\mid -r_{u}<p_{t}-\cfrac{\omega}{2}\}.\end{equation}
There is no difficulty to get:
\begin{lem}
For each $1\leq t\leq n$, the cardinal of the set $\{u\mid (t,u)\in A \}$ equals $sp(t,M;M')+sp(t+1,M;M')+\cdots sp(n,M;M')$.
\end{lem}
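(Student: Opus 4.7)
The plan is to translate the defining condition of the set $\{u\mid (t,u)\in A\}$ into a statement about where $-r_u$ lies in the partition of the real line induced by the markers $p_i-\omega/2$, and then read off the answer directly from the definition of the split indices.

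First I would rewrite the condition: $(t,u)\in A$ means $p_t+r_u>\omega/2$, equivalently
$$-r_u<p_t-\cfrac{\omega}{2}.$$
So the set in question is $\{u\mid -r_u<p_t-\omega/2\}$, which is already the reformulation indicated in the text just above the lemma.

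Next I would unfold the definition of $sp(i,M;M')$. The strictly decreasing sequence $-r_{n'}>-r_{n'-1}>\cdots>-r_1$ is cut into $n+1$ consecutive blocks by the strictly decreasing markers $p_1-\omega/2>p_2-\omega/2>\cdots>p_n-\omega/2$; block number $i$ consists of the indices $u$ with
$$p_{i+1}-\cfrac{\omega}{2}<-r_u<p_i-\cfrac{\omega}{2}\qquad(1\leq i\leq n-1),$$
together with the boundary blocks $\{u\mid -r_u>p_1-\omega/2\}$ for $i=0$ and $\{u\mid -r_u<p_n-\omega/2\}$ for $i=n$, whose sizes are $sp(0,M;M'),\ldots,sp(n,M;M')$ respectively. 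Here I would remark that the hypothesis that $R(M\otimes M')$ has no $(\omega/2,\omega/2)$-class forbids $p_i+r_u=\omega/2$ for any pair $(i,u)$, so each $-r_u$ lies strictly inside exactly one of these blocks and there is no ambiguity at the boundaries.

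Finally, the set $\{u\mid -r_u<p_t-\omega/2\}$ is, by construction, exactly the union of the blocks indexed by $t,t+1,\ldots,n$: an index $u$ satisfies the inequality iff its value $-r_u$ lies below the $t$-th marker, i.e. in one of the blocks to the right of it. Summing cardinalities yields
$$\#\{u\mid (t,u)\in A\}=\sum_{i=t}^{n}sp(i,M;M'),$$
which is the claim. The whole argument is combinatorial bookkeeping around the definition; the only conceptual point worth flagging is the role of Hypothesis~\ref{hyp1} in guaranteeing that no $-r_u$ coincides with a marker, so I do not anticipate any serious obstacle.
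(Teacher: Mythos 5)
Your proof is correct and follows the same route the paper indicates: the paper reduces $\#\{u\mid (t,u)\in A\}$ to $\#\{u\mid -r_u<p_t-\omega/2\}$ in the line preceding the lemma and then declares the rest immediate, while you spell out the block decomposition induced by the markers $p_i-\omega/2$ and observe that the condition selects exactly blocks $t$ through $n$. Your remark that Hypothesis~\ref{hyp1} rules out equality $p_i+r_u=\omega/2$, so each $-r_u$ falls strictly inside one block, is a point the paper leaves implicit and is worth making explicit.
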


Therefore, we have 
\begin{eqnarray}
\prod_{(t,u)\in  A}Q_{t}(M)&=&\prod\limits_{t=1}^{n}Q_{t}(M)^{\sum\limits_{j=t}^{n}sp(j,M;M')}= \prod\limits_{j=1}^{n}[\prod\limits_{t=1}^{j}Q_{1}(M)]^{sp(j,M;M')}\nonumber\\
&=& \prod\limits_{j=1}^{n}Q_{(j)}(M)^{sp(j,M;M')}= \prod\limits_{j=0}^{n}Q_{(j)}(M)^{sp(j,M;M')}\nonumber.
\end{eqnarray}

Recall that we have defined $Q_{(0)}(M)=1$.

By the symmetry, we can deduce that
\begin{equation}\label{Q_leq}
\prod_{(t,u)\in A}Q_{t}(M)Q_{u}(M')=\prod\limits_{j=0}^{n}Q_{(j)}(M)^{sp(j,M;M')}\prod\limits_{k=0}^{n'}Q_{(k)}(M')^{sp'(k,M';M)}.\end{equation}

Recall by Lemma \ref{sum of split index} that $\sum\limits_{j=0}^{n} sp(j,M;M') =n'$ and $\sum\limits_{k=0}^{n;} sp(k,M';M) =n$. We know\begin{equation}
\delta(M\otimes M') \sim_{EE'} \delta(M)^{n'}\delta(M')^{n}\sim_{EE'}\prod\limits_{j=0}^{n} \delta(M)^{sp(j,M;M')}\prod\limits_{k=0}^{n} \delta(M')^{sp(k,M';M)}.
\end{equation}

We finally get:
\begin{equation}
c^{+}(R(M\otimes M'))\sim_{EE'} \prod\limits_{j=0}^{n}(Q_{(j)}(M)\delta(M))^{sp(j,M;M')}\prod\limits_{k=0}^{n'}(Q_{(k)}(M')\delta(M))^{sp'(k,M';M)}.
\end{equation}

For the purpose of compatibility with automorphic setting which has different normalization, we define \begin{equation}\label{Delta}
\Delta(M):=(2\pi i)^{\frac{n(n-1)}{2}}\delta(M)\text{ and }\Delta(M'):=(2\pi i)^{\frac{n'(n'-1)}{2}}\delta(M').
\end{equation}

 We set \begin{equation}\label{period Q}
 Q^{(j)}(M)=Q_{(j)}(M)\Delta(M)\text{ and } Q^{(k)}(M')=Q_{(k)}(M')\Delta(M')
 \end{equation} for $0\leq j\leq n$ and $0\leq k\leq n'$.

We conclude that:
\begin{prop}
The Deligne period for $R(M\otimes M')$ satisfies the following formula:
\begin{equation}
c^{+}(R(M\otimes M'))\sim_{EE'}(2\pi i)^{\frac{-nn'(n+n'-2)}{2}}\prod\limits_{j=0}^{n}Q^{(j)}(M)^{sp(j,M;M')}
\prod\limits_{k=0}^{n'}Q^{(k)}(M')^{sp(k,M';M)}.\nonumber
\end{equation}
\end{prop}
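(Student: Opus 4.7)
The plan is essentially bookkeeping: substitute the definitions \eqref{Delta} and \eqref{period Q} of $\Delta(M)$, $\Delta(M')$, $Q^{(j)}(M)$ and $Q^{(k)}(M')$ into the last displayed formula of this subsection and collect the resulting powers of $2\pi i$ via Lemma \ref{sum of split index}. No new conceptual ingredient is required.

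More concretely, I would take as the starting point the formula
$$c^{+}(R(M\otimes M'))\sim_{EE'} \prod_{j=0}^{n}\bigl(Q_{(j)}(M)\delta(M)\bigr)^{sp(j,M;M')}\prod_{k=0}^{n'}\bigl(Q_{(k)}(M')\delta(M')\bigr)^{sp(k,M';M)}$$
that was just derived. This display already encodes the combinatorics of the sets $A$ and $T$ coming from Proposition \ref{deligne period tensor product}, the multiplicativity $\delta(M\otimes M')\sim_{EE'}\delta(M)^{n'}\delta(M')^{n}$, the identification \eqref{Q_leq}, and the first assertion of Lemma \ref{sum of split index}.

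Next, combining $\Delta(M)=(2\pi i)^{n(n-1)/2}\delta(M)$ with $Q^{(j)}(M)=Q_{(j)}(M)\Delta(M)$ gives the single identity
$$Q_{(j)}(M)\,\delta(M) \;=\; (2\pi i)^{-n(n-1)/2}\,Q^{(j)}(M),$$
and analogously $Q_{(k)}(M')\,\delta(M')=(2\pi i)^{-n'(n'-1)/2}\,Q^{(k)}(M')$. Substituting these into the starting formula and using $\sum_{j=0}^{n}sp(j,M;M')=n'$ and $\sum_{k=0}^{n'}sp(k,M';M)=n$ from Lemma \ref{sum of split index}, the total power of $2\pi i$ that is extracted is
$$-\tfrac{n(n-1)}{2}\cdot n' \;-\; \tfrac{n'(n'-1)}{2}\cdot n \;=\; -\tfrac{nn'(n+n'-2)}{2},$$
which is exactly the exponent appearing in the statement.

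Since the real content sits in Proposition \ref{deligne period tensor product} and in the combinatorial identity \eqref{Q_leq}, I do not anticipate any serious obstacle in the present step: it is purely a change of normalization chosen for compatibility with the automorphic side (see Section~$2.4$). The only point that requires a little care is tracking the two exponents $-n(n-1)/2$ and $-n'(n'-1)/2$ against the split indices and verifying that they add up to $-nn'(n+n'-2)/2$, as above.
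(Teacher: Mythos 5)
Your proposal is correct and matches the paper's own derivation, which is carried out inline just above the proposition rather than in a separate proof environment: starting from $c^{+}(R(M\otimes M'))\sim_{EE'}\prod_{j}(Q_{(j)}(M)\delta(M))^{sp(j,M;M')}\prod_{k}(Q_{(k)}(M')\delta(M'))^{sp(k,M';M)}$, substituting the definitions (\ref{Delta}) and (\ref{period Q}), and using Lemma \ref{sum of split index} to collect the power of $2\pi i$ as $-\tfrac{n(n-1)}{2}n'-\tfrac{n'(n'-1)}{2}n=-\tfrac{nn'(n+n'-2)}{2}$. (You also silently corrected a small typo in the paper's display, which writes $\delta(M)$ where $\delta(M')$ is meant in the $k$-product.)
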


\begin{rem}
When $M$ is the motive associated to a conjugate self-dual automorphic representation, the motivic period $Q^{(j)}$ is the same as the motivic period $P_{\leq n+1-s}$ in section $4$ of \cite{harrismotivic}. We shall see this in Proposition \ref{comparison with GH}.
\end{rem}
\bigskip

We remark that for motives restricted from a quadratic imaginary field, the two Deligne periods differ by an element in the coefficient field. Therefore, we may ignore the sign in the original Deligne conjecture. We can now state the Deligne conjecture for tensor product of two motives over $K$.

\begin{conj}\label{Deligne automorphic}

Let $M$ and $M'$ be two pure regular motives over $K$ of rank $n$ and $n'$ respectively. We write $\omega$ for the rank of $M\otimes M'$. We assume that $M\otimes M'$ has no $(\omega/2,\omega/2)$-class.

If $m+\cfrac{n+n'-2}{2}\in \Z$ is a critical point for $M\otimes M'$, then the Deligne conjecture predicts that:
\begin{eqnarray}\label{equation motivic}
&L(m+\frac{n+n'-2}{2},R(M\otimes M')) \sim_{EE'}&\\&
(2\pi i)^{nn'm}\prod\limits_{j=0}^{n}Q^{(j)}(M)^{sp(j,M;M')}
\prod\limits_{k=0}^{n'}Q^{(k)}(M')^{sp(k,M';M)}.\nonumber&
\end{eqnarray}

\end{conj}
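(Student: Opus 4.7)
The plan is to derive Conjecture \ref{Deligne automorphic} by combining Conjecture \ref{deligne conjecture} applied to $M^{\#}:=R(M\otimes M')$ at the integer critical point $m^{\#}:=m+\tfrac{n+n'-2}{2}$ with the explicit formula for $c^{+}(R(M\otimes M'))$ proved in Proposition \ref{deligne period tensor product} and simplified in the previous subsection. No genuine new input is needed: all the hard combinatorial work, namely identifying the exponents $sp(j,M;M')$ and $sp(k,M';M)$, has already been done above.

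First I would compute $d^{\pm}$ for $M^{\#}$. Its Betti realization decomposes as $M_{B}\otimes M'_{B}\oplus M^{c}_{B}\otimes M'^{c}_{B}$, so $\text{rank}(M^{\#})=2nn'$, and under Hypothesis \ref{hyp1} we have $d^{+}=d^{-}=nn'$. Applying Conjecture \ref{deligne conjecture} to the integer $m^{\#}$ then yields
\begin{equation}\nonumber
L(m^{\#}, M^{\#})\sim_{EE'} (2\pi i)^{nn' m^{\#}}\, c^{\epsilon}(M^{\#})
\end{equation}
with $\epsilon=(-1)^{m^{\#}}$. As pointed out in the remark preceding Conjecture \ref{Deligne automorphic}, the two Deligne periods $c^{+}(M^{\#})$ and $c^{-}(M^{\#})$ differ only by an element of the coefficient field $EE'$: the infinite Frobenius on $M^{\#}_{B}$ swaps the two summands $M_{B}\otimes M'_{B}$ and $M^{c}_{B}\otimes M'^{c}_{B}$ and thereby identifies the two eigenspaces via an $EE'$-rational isomorphism. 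Hence $c^{\epsilon}(M^{\#})$ may be replaced by $c^{+}(M^{\#})$ without affecting the relation $\sim_{EE'}$.

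It then remains only to substitute the formula
\begin{equation}\nonumber
c^{+}(R(M\otimes M'))\sim_{EE'}(2\pi i)^{-nn'(n+n'-2)/2}\prod_{j=0}^{n}Q^{(j)}(M)^{sp(j,M;M')}\prod_{k=0}^{n'}Q^{(k)}(M')^{sp(k,M';M)}
\end{equation}
and to collect powers of $2\pi i$. The exponents combine as $nn'\,m^{\#}-nn'(n+n'-2)/2=nn'\,m$, which reproduces exactly equation \ref{equation motivic}. The only delicate point in the argument is the sign handling discussed above; everything else is a direct substitution of the two inputs, and I do not expect any genuine obstacle to appear in the derivation itself. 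The actual difficulty, of course, lies outside this reformulation: it is the unconditional proof of Conjecture \ref{deligne conjecture} for $R(M\otimes M')$, which this paper does not attempt.
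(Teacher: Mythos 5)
Your derivation is correct and follows exactly the paper's implicit route: specialize Conjecture \ref{deligne conjecture} to $M^{\#}=R(M\otimes M')$ at $m^{\#}=m+\tfrac{n+n'-2}{2}$, use $d^{\pm}=\tfrac{1}{2}\mathrm{rank}(M^{\#})=nn'$, invoke the remark that $c^{+}\sim_{EE'}c^{-}$ for restriction-of-scalars motives (which the paper likewise asserts without detailed proof, but which follows from the $Mat_1$ computation in Proposition \ref{deligne period tensor product} with $(1+F_\infty)$ replaced by $(1-F_\infty)$, changing the determinant by at most a sign), substitute the simplified formula for $c^{+}(R(M\otimes M'))$, and combine powers of $2\pi i$. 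No divergence from the paper's logic.
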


\section{The automorphic variant}
\bigskip
 \subsection{Motives associated to automorphic representations}

Let $\Pi=\Pi_{f}\otimes \Pi_{\infty}$ be a cuspidal representation of $GL_{n}(\AK)$. We denote the infinity type of $\Pi$ by $(z^{a_{i}}\overline{z}^{b_{i}})_{1\leq i\leq n}$.

We assume that $\Pi$ is \textbf{regular} which means that its infinity type satisfies $a_{i}\neq a_{j}$ for all $1\leq i<j\leq n$. We assume also $\Pi$ is \textbf{algebraic} which means that its infinity type satisfies $a_{i},b_{i}\in\Z+\cfrac{n-1}{2}$ for all $1\leq i\leq n$. By Lemma $4.9$ of \cite{clozelaa}), we know that there exists an integer $w_{\Pi}$ such that $a_{i}+b_{i}=-w_{\Pi}$ for all $i$.

We denote by $V$ the representation space of $\Pi_{f}$. For $\sigma\in Aut(\C)$, we define $\Pi_{f}^{\sigma}$, another $GL_{n}(\AK\text{}_{,f})$-representation, to be $V\otimes_{\C,\sigma}\C$. Let $\Q(\Pi)$ be the subfield of $\C$ fixed by $\{\sigma\in Aut(\C) \mid \Pi_{f}^{\sigma} \cong \Pi_{f}\}$. We call it the \textbf{rationality field} of $\Pi$.

For $E$ a number field, $G$ a group and $V$ a $G$-representation over $\C$, we say $V$ has an \textbf{$E$-rational structure} if there exists a $E$-vector space $V_{E}$ endowed with an action of $G$ such that $V=V_{E}\otimes_{E}\C$ as a representation of $G$. We call $V_{E}$ an $E$-rational structure of $V$. We have the following result (c.f. \cite{clozelaa} Theorem $3.13$):

\begin{thm}
For $\Pi$ a regular algebraic cuspidal representation of $GL_{n}(\AK)$, $\Q(\Pi)$ is a number field. Moreover, $\Pi_{f}$ has a $\Q(\Pi)$-rational structure. For all $\sigma\in Aut(\C)$, $\Pi_{f}^{\sigma}$ is the finite part of a cuspidal representation of $GL_{n}(\AK)$ which is unique by the strong multiplicity one theorem, denoted by $\Pi^{\sigma}$.
\end{thm}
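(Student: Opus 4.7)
The plan is to realise $\Pi_f$ inside the cuspidal cohomology of an arithmetic locally symmetric space attached to $GL_n/K$ and to transport the natural Betti rational structure on that cohomology to $\Pi_f$. Since $\Pi$ is regular algebraic, a standard criterion of Borel--Wallach produces an algebraic irreducible representation $E_\mu$ of $\text{Res}_{K/\Q}GL_n$ whose highest weight $\mu$ is read off from the infinity type of $\Pi$ and for which $H^*(\lieG,\lieK;\Pi_\infty\otimes E_\mu)\neq 0$. By the results of Borel--Garland and Franke, the cuspidal spectrum contributes to the cohomology of the tower of locally symmetric spaces $S$ with coefficients in the local system $\widetilde{E}_\mu$, so $\Pi_f$ appears as a Hecke-isotypic subspace of $H^*_{cusp}(S,\widetilde{E}_\mu\otimes\C)$.

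The arithmetic backbone is that $E_\mu$ is defined over a number field $F_\mu\subset\overline{\Q}$, so the Betti cohomology $H^*(S,\widetilde{E}_\mu)$ is naturally an $F_\mu$-vector space on which the Hecke algebra acts $F_\mu$-rationally. Consequently $\text{Aut}(\C/F_\mu)$ permutes Hecke eigensystems in $H^*(S,\widetilde{E}_\mu\otimes\C)$. I would then verify that this permutation agrees, on the eigenspace attached to $\Pi_f$, with the abstract action $\Pi_f\mapsto\Pi_f^\sigma$: the unramified Hecke eigenvalues transform under $\sigma$ in the expected way and pin down the eigenspace uniquely, so the stabiliser of $\Pi_f$ in $\text{Aut}(\C)$ is open and $\Q(\Pi)$ is a number field. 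The Hecke isotypic subspace itself then furnishes a $\Q(\Pi)$-rational model of $\Pi_f$.

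The delicate point, and the main obstacle, is to ensure that for every $\sigma$ the representation $\Pi_f^\sigma$ is genuinely the finite part of a \emph{cuspidal} automorphic representation, rather than a piece of the residual or Eisenstein spectrum or a merely abstract admissible representation manufactured from cohomology. This requires that $H^*_{cusp}$ be an $\text{Aut}(\C)$-stable direct summand of $H^*(S,\widetilde{E}_\mu\otimes\C)$. The cleanest argument invokes Franke's filtration of automorphic forms, which is rational, together with the rationality of Eisenstein contributions; alternatively one identifies cuspidal classes with interior cohomology classes orthogonal to boundary contributions and checks that this decomposition is $\text{Aut}(\C)$-equivariant. Granted this stability, the transported Hecke eigenspace determines a cohomological cuspidal representation of $GL_n(\AK)$ whose infinity type is dictated by $\sigma\mu$, and strong multiplicity one of Jacquet--Shalika yields the uniqueness of the resulting $\Pi^\sigma$.
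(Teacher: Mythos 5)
The paper does not itself prove this statement: it is quoted directly from Clozel \cite{clozelaa} (Theorem 3.13), and the text explicitly refers the reader there. Your sketch is, in essence, Clozel's proof: embed $\Pi_f$ into the cohomology of the locally symmetric spaces for $\mathrm{Res}_{K/\Q}GL_n$ with coefficients in the algebraic local system attached to $E_\mu$, transport the $F_\mu$-rational Betti structure to the $\Pi_f$-isotypic piece, and let $\mathrm{Aut}(\C)$ permute Hecke eigensystems, so that finite-dimensionality of the cohomology forces the orbit to be finite, the stabiliser to be open, and $\Q(\Pi)$ to be a number field. Two clarifications are worth recording. First, what you present as the \emph{alternative} route---identifying cuspidal cohomology with interior cohomology, i.e.\ the image of $H^*_c$ in $H^*$, which is visibly defined over $F_\mu$ and hence $\mathrm{Aut}(\C)$-stable---is in fact the argument Clozel uses; Franke's filtration postdates his paper and is not the mechanism. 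Second, and more substantively, the regularity of $\Pi$ is exactly what makes that identification true: for a regular weight $\mu$ the cohomological archimedean representations with infinitesimal character $\mu+\rho$ are essentially tempered, so neither residual (Speh) nor Eisenstein classes survive in interior cohomology, which is therefore purely cuspidal. Your sketch treats that stability as a detail to be ``granted,'' but it is the precise point where the hypothesis ``regular'' does all the work, and it would fail without it. Once this is in place, the descent of a $\sigma$-conjugate Hecke eigensystem to a genuine cuspidal automorphic representation of $GL_n(\AK)$ and the uniqueness via Jacquet--Shalika strong multiplicity one are as you describe.
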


It is conjectured that such a $\Pi$ is attached to a motive with coefficients in a finite extension of $\Q(\Pi)$:
\begin{conj}\label{conjmotive}(Conjecture $4.5$ and paragraph $4.3.3$ of \cite{clozelaa})

Let $\Pi$ be a regular algebraic cuspidal representation of $GL_{n}(\AK)$ and $\Q(\Pi)$ its rationality field. There exists $E$ a finite extension of $\Q(\Pi)$ and $M$ a regular motive of rank $n$ over $K$ with coefficients in $E$ such that
\begin{equation}\nonumber 
L(s,M,\sigma)=L(s+\cfrac{1-n}{2},\Pi^{\sigma})\end{equation}
for all $\sigma: E\hookrightarrow \C$.

Moreover, if the infinity type of $\Pi$ is $(z^{a_{i}}\overline{z}^{-\omega(\Pi)-a_{i}})_{1\leq i\leq n}$, then the Hodge type of $M$ is $\{(-a_{i}+\cfrac{n-1}{2},\omega(\Pi)-a_{i}+\cfrac{n-1}{2})\mid 1\leq i\leq n\}$.

In particular, this motive is pure of weight $w_{\Pi}+n-1$.

\end{conj}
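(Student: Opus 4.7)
The plan is to realize $M$ as a direct summand of the intersection cohomology of a unitary Shimura variety and then match its realizations against $\Pi$. First I would base change $\Pi$ to a cuspidal representation $\widetilde{\Pi}$ of an inner form $U$ of a unitary group in $n$ variables whose associated Shimura variety $\mathrm{Sh}(U)$ has $K$ as reflex field, and for which $\widetilde{\Pi}_{\infty}$ is cohomological for an irreducible algebraic representation $W_{\lambda}$ whose highest weight $\lambda$ is read off from the infinity type $(z^{a_{i}}\overline{z}^{b_{i}})$; the regularity and algebraicity of $\Pi$ are exactly what is needed to guarantee such a descent, possibly after combining several inner forms of different signatures.

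Next I would cut out the $\Pi_{f}$-isotypic component of the intersection cohomology $IH^{\ast}(\mathrm{Sh}(U),\mathcal{F}_{\lambda})$ with coefficients in the automorphic local system attached to $W_{\lambda}$. This produces a compatible system of $\ell$-adic Galois representations of $G_{K}$, together with a Hodge structure on the Betti side and a Hodge filtration on the de Rham side. The Hodge type is read off from the $(\mathfrak{g},K_{\infty})$-cohomology $H^{\ast}(\mathfrak{g},K_{\infty};\widetilde{\Pi}_{\infty}\otimes W_{\lambda}^{\vee})$ via Matsushima's formula, which yields exactly the pairs $\{(-a_{i}+(n-1)/2,\omega(\Pi)-a_{i}+(n-1)/2)\}$. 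The identity $L(s,M,\sigma)=L(s+(1-n)/2,\Pi^{\sigma})$ at unramified primes follows from the compatibility between Satake parameters and Frobenius eigenvalues, the shift $(1-n)/2$ being the standard unitary-versus-arithmetic normalization; at ramified places one appeals to local-global compatibility results of Shin, Caraiani, and others. The descent of coefficients from $\C$ to a finite extension $E$ of $\Q(\Pi)$ uses the Galois action $\sigma\mapsto\Pi^{\sigma}$ together with the $\Q(\Pi)$-rational structure supplied by the theorem immediately preceding the conjecture.

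The main obstacle is upgrading this package of realizations into a motive for absolute Hodge cycles. The Hecke idempotent that cuts out the $\Pi_{f}$-isotypic component is a priori only an algebraic correspondence modulo homological equivalence; to obtain a motive in the sense of Section 1.1 one needs it to be represented by an absolute Hodge cycle on $\mathrm{Sh}(U)\times\mathrm{Sh}(U)$. For Shimura varieties of abelian type one can hope to apply Deligne's principle B and the known cases of the Hodge conjecture for abelian varieties, following the strategy initiated by Blasius--Rogawski; in the general case this step reduces to the Hodge and Tate conjectures for products of Shimura varieties and is presently out of reach. This is precisely the difficulty the author flags in the introduction when stating that Relation 2.2 is conjectural and very difficult to prove, so a complete proof of Conjecture \ref{conjmotive} lies beyond current methods.
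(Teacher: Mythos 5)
The statement you are trying to prove is stated in the paper as a \emph{conjecture} (attributed to Clozel, Conjecture $4.5$ and paragraph $4.3.3$ of \cite{clozelaa}), and the paper offers no proof of it; it is taken as an input from the Langlands program. So there is no argument in the paper to compare yours against, and to your credit you recognize this explicitly in your last paragraph.

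That said, there is a genuine gap in the route you sketch even as a conditional outline. You propose to descend $\Pi$ to a unitary group $U$ in $n$ variables and cut $M$ out of the cohomology of $\mathrm{Sh}(U)$. But descent from $GL_{n}(\AK)$ to a unitary group requires $\Pi$ to be conjugate self-dual, i.e.\ $\Pi^{c}\cong\Pi^{\vee}$, together with (when $n$ is even) a local condition like Hypothesis \ref{local condition}. Conjecture \ref{conjmotive}, however, is stated for an \emph{arbitrary} regular algebraic cuspidal $\Pi$, with no conjugate self-duality assumption. Your phrase that ``regularity and algebraicity of $\Pi$ are exactly what is needed to guarantee such a descent'' is therefore false: those hypotheses control cohomological and arithmetic properties, but they do not make $\Pi$ descend to any inner form of a unitary group. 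In the non-conjugate-self-dual case the Galois representations attached to $\Pi$ are not realized directly in the cohomology of a Shimura variety; the existing constructions (Harris--Lan--Taylor--Thorne, Scholze) proceed by $p$-adic interpolation from the self-dual case and produce $\ell$-adic representations, not a full motivic package with matching Betti, de Rham, and $\ell$-adic realizations.

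Beyond that, the obstruction you correctly identify (promoting the Hecke projector to an absolute Hodge cycle, and hence producing an actual object of the category of motives for absolute Hodge cycles with the predicted Hodge type and $L$-function) is the reason this remains a conjecture. A small terminological slip: you speak of ``base changing $\Pi$ to a representation of $U$,'' but base change goes from $U(\AQ)$ to $GL_{n}(\AK)$; the operation you want is descent, as you in fact say later. In summary: the paper treats this statement as conjectural background and builds the rest of the argument conditionally on it; your outline, even restricted to the conjugate self-dual case, stops at the same wall, and in the general case the very first step (descent to $U$) does not apply.
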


This is a part of the Langlands program. In light of this, we want to associate a motive, or more precisely, a geometric object to certain representations. Unfortunately there is no hermitian symmetric domain associated to  $GL_{n}$ for $n>2$. We can not use theory of Shimura variety directly. We need to pass to unitary groups via base change theory.

\bigskip
\subsection{Base change and Langlands functoriality}

Let $G$ and $G'$ be two connected quasi-split reductive algebraic groups over $\Q$. Put $\widehat{G}$ the complex dual group of $G$. The Galois group $G_{\Q}=Gal(\overline{\Q}/\Q)$ acts on $\widehat{G}$. We define the \textbf{$L$-group} of $G$ by $\text{}^{L}G:=\widehat{G} \rtimes G_{\Q}$ and we define $\text{}^{L}G'$ similarly. A group homomorphism between two $L$-groups $\text{}^{L}G\rightarrow \text{}^{L}G'$ is called an \textbf{$L$-morphism} if it is continuous, its restriction to $\widehat{G}$ is analytic and it is compatible with the projections of $\text{}^{L}G$ and $\text{}^{L}G'$ to $G_{\Q}$. If there exists an $L$-morphism $\text{}^{L}G\rightarrow \text{}^{L}G'$, the \textbf{Langlands principal of functoriality} predicts a correspondence between automorphic representations of $G(\AQ)$ and automorphic representations of $G'(\AQ)$ (c.f. section $26$ of \cite{arthurtraceformula}). More precisely, we wish to associate an $L$-packet of automorphic representations of $G(\AQ)$ to that of $G'(\AQ)$.

Locally, we can specify this correspondence for unramified representations. Let $p$ be a finite place of $\Q$ such that $G$ is unramified at $p$. We fix $\Gamma_{p}$ a maximal compact hyperspecial subgroup of $G_{p}:=G(\Q_{p})$. By definition, for $\pi_{p}$ an admissible representation of $G_{p}$, we say $\pi_{p}$ is \textbf{unramified} (with respect to $\Gamma_{p}$) if it is irreducible and $dim \pi_{p}^{\Gamma_{p}} > 0$. One can show that $\pi_{p}^{\Gamma_{p}}$ is actually one dimensional since $\pi_{p}$ is irreducible.

We set $H_{p}:=\mathcal{H}(G_{p},\Gamma_{p})$ the Hecke algebra consisting of compactly supported continuous functions from $G_{p}$ to $\C$ which are $\Gamma_{p}$ invariants at both side. We know $H_{p}$ acts on $\pi_{p}$ and preserves $\pi_{p}^{\Gamma_{p}}$ (c.f. \cite{minguez}). Since $\pi_{p}^{\Gamma_{p}}$ is one-dimensional, every element in $H_{p}$ acts as a multiplication by a scalar on it. In other words, $\pi_{p}$ thus determines a character of $H_{p}$. This gives a map from the set of unramified representations of $G_{p}$ to the set of characters of $H_{p}$ which is in fact a bijection (c.f. section $2.6$ of \cite{minguez}).

We can moreover describe the structure of $H_{p}$ in a simpler way. Let $T_{p}$ be a maximal torus of $G_{p}$ contained in a Borel subgroup of $G_{p}$. We denote by $X_{*}(T_{p})$ the set of cocharacters of $T_{p}$ defined over $\Q_{p}$. The Satake transform identifies the Hecke algebra $H_{p}$ with the polynomial ring $\C[X_{*}(T_{p})]^{W_{p}}$ where $W_{p}$ is the Weyl group of $G_{p}$ (c.f. section $1.2.4$ of \cite{harristakagi}).

Let $G'$ be a connected quasi-split reductive group which is also unramified at $p$. We can define $\Gamma'_{p}$, $H'_{p}:=\mathcal{H}(G'_{p},\Gamma'_{p})$ and $T'_{p}$ similarly. An $L$-morphism $\text{}^{L}G\rightarrow \text{}^{L}G'$ induces a morphism $\widehat{T_{p}}\rightarrow \widehat{T'_{p}}$ and hence a map $T'_{p}\rightarrow T_{p}$. The latter gives a morphism from $\C[X_{*}(T'_{p})]^{W'_{p}}$ to $\C[X_{*}(T_{p})]^{W_{p}}$ and thus a morphism from $H'_{p}$ to $H_{p}$. Its dual hence gives a map from the set of unramified representations of $G_{p}$ to that of $G'_{p}$. This is the local Langlands principal of functoriality for unramified representations.

\bigskip

In this article, we are interested in a particular case of the Langlands functoriality: the cyclic base change. Let $F$ be a cyclic extension of $\Q$, for example, a quadratic imaginary field. Let $G$ be a connected quasi-split reductive group over $\Q$. Put $G'=Res_{F/\Q}G_{F}$. We know $\widehat{G'}$ equals $\widehat{G}^{[F:\Q]}$. The diagonal embedding is then a natural $L$-morphism $\text{}^{L}G \rightarrow \text{}^{L}G'$. The corresponding functoriality is called the (global) \textbf{base change}.

More precisely, let $p$ be a finite place of $\Q$. The above $L$-morphism gives a map from the set of unramified representations of $G(\Q_{p})$ to that of $G'(\Q_{p})$ where the latter is isomorphic to $G(F_{p})\cong \bigotimes\limits_{v|p} G(F_{v})$. By the tensor product theorem, all the unramified representation of $G(F_{p})$ can be written uniquely as tensor product of unramified representations of $G(F_{v})$ where $v$ runs over the places of $F$ above $p$. We fix $v$ a place of $F$ above $p$ and we then get a map from the set of unramified representation of $G(\Q_{p})$ to that of $G(F_{v})$. For an unramified representation of $G(\Q_{v})$, we call the image its (local) \textbf{base change} with respect to $F_{v}/\Q_{p}$.

Let $\pi$ be an admissible irreducible representation of $G(\AQ)$. We say $\Pi$, a representation of $G(\AF)$, is a \textbf{(weak) base change} of $\pi$ if for all $v$, a finite place of $\Q$, such that $\pi$ is unramified at $v$ and all $w$, a place of $F$ over $v$, $\Pi_{w}$ is the base change of $\pi_{v}$. In this case, we say $\Pi$ \textbf{descends to $\pi$} by base change. Moreover, if $p$ is unramified for both $\pi$ and $\Pi$, it is easy to see that $\Pi_{p}$ and $\pi_{p}$ have the same $L$-factor.

\begin{rem}
The group $Aut(F)$ acts on $G(\AF)$. This induces an action of $Aut(F)$ on automorphic representations of $G(\AF)$. We denote this action by $\Pi^{\sigma}$ for $\sigma\in Aut(F)$ and $\Pi$ an automorphic representation of $G(\AF)$. It is easy to see that if $\Pi$ is a base change of $\pi$, then $\Pi^{\sigma}$ is one as well. So if we have the strong multiplicity one theorem for $G(\AF)$, we can conclude that every representation in the image of base change is $Aut(F)$-stable.

\end{rem}

\begin{ex}\textbf{Base change for $GL_{1}$}

Now let us give an example of base change. Let $F/L$ be a cyclic extension of numbers fields. Let $\sigma$ be a generator of $Gal(F/L)$. The automorphic representations of $GL_{1}$ over a number field are nothing but Hecke characters. Let $\eta$ be a Hecke character of $L$. The base change of $\eta$ to $GL_{1}(\AF)$ in this case is just $\eta\circ N_{\AF/\AL}$.

On the other hand, as discussed above, if $\chi$ is a Hecke character of $\AF$, then a necessary condition for it to descend is $\Pi =\Pi^{\sigma}$ for all $\sigma\in Gal(F/L)$. We shall see that this is also sufficient.

\begin{thm}\label{bccharacter}
Let $F/L$ be a cyclic extension of number fields and $\chi$ be a Hecke character of $F$. If $\chi=\chi^{\sigma}$ for all $\sigma\in Gal(F/L)$, then there exists $\eta$, a Hecke character of $L$, such that $\chi=\eta\circ N_{\AF/\AL}$.
Moreover, if $\chi$ is unramified at some place $v$ of $L$, we can choose $\eta$ to be unramified at places of $K$ dividing by $v$.
\end{thm}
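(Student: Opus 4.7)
The plan is to combine Hilbert's Theorem $90$ for idèle class groups with a straightforward extension argument. Write $C_F := \AF^\times/F^\times$, $C_L := \AL^\times/L^\times$, and fix a generator $\sigma$ of $G := Gal(F/L)$. The key cohomological input is that $\hat H^{-1}(G, C_F) = 0$, which for the cyclic group $G$ amounts to the equality $\ker(N\colon C_F \to C_L) = (\sigma-1)C_F = \{x\,\sigma(x)^{-1} : x \in C_F\}$. Combined with the isomorphism $C_L/N(C_F) \simeq G$ coming from global class field theory, these two facts will carry the proof.

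First I would translate the hypothesis $\chi = \chi^\sigma$ into the statement that $\chi(x\,\sigma(x)^{-1}) = 1$ for every $x \in C_F$: in other words $\chi$ is trivial on $(\sigma-1)C_F$ and hence, by Hilbert $90$, on $\ker N$. Consequently $\chi$ factors through $C_F/\ker N \xrightarrow{\sim} N(C_F)$, yielding a continuous character $\tilde\eta\colon N(C_F) \to \C^\times$ satisfying $\tilde\eta \circ N = \chi$. Since $C_L/N(C_F)$ is finite and $\C^\times$ is divisible, $\tilde\eta$ extends to a continuous character $\eta\colon C_L \to \C^\times$; any two such extensions differ by a character of $G$ pulled back through $C_L \twoheadrightarrow G$. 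By construction this $\eta$ is a Hecke character of $L$ satisfying $\eta \circ N_{\AF/\AL} = \chi$.

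For the ramification assertion, let $U_v \subset C_L$ denote the image of $\mathcal O_{L_v}^\times$; the character $\eta$ is unramified at $v$ precisely when $\eta|_{U_v} = 1$. Using the freedom to modify $\eta$ by characters of $G$, such a choice exists if and only if $\tilde\eta$ is already trivial on $U_v \cap N(C_F)$. To verify this, given $u \in U_v \cap N(C_F)$, I would exhibit a class $[z] \in C_F$ with $N[z] = u$ having a representative $\tilde z \in \AF^\times$ which is a local unit at every $w|v$, so that the unramified hypothesis on $\chi$ forces each $\chi_w(\tilde z_w)$ to be $1$. When $v$ is unramified in $F/L$ this is easy: one takes $\tilde z_w = 1$ for $w \nmid v$ and invokes the surjectivity of $\prod_{w|v}\mathcal O_{F_w}^\times \twoheadrightarrow \mathcal O_{L_v}^\times$. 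The general ramified case requires a weak-approximation argument on $F^\times$ at the finite set of places above $v$ to absorb the remaining components into an element of $F^\times$, on which $\chi$ is trivial. This final local-global clearance is where the main technical effort lies.
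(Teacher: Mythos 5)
Your construction of $\eta$ in the first part is correct and is essentially the paper's argument, differently packaged. The paper separately uses Hilbert~90 for the id\`ele group $\AF^{\times}$ (to show $\eta_{0}(w):=\chi(z)$, for $N_{\AF/\AL}(z)=w$, is well-defined on $N_{\AF/\AL}(\AF^{\times})$) and the Hasse norm theorem (so that $\eta_{0}$ kills $L^{\times}\cap N_{\AF/\AL}(\AF^{\times})=N_{F/L}(F^{\times})$ and descends to a finite-index open subgroup of $C_{L}$). You compress these two inputs into the single class-field-theoretic vanishing $\widehat{H}^{-1}(G,C_{F})=0$, which is equivalent to the pair of facts used in the paper, and then extend exactly as the paper does. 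Either route is fine.

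The ramification clause is where you have a real gap. (The paper's own proof does not address it; the phrase ``we can thus extend $\eta_{0}$ \dots as we want'' is doing all the work.) Your reduction to showing that $\tilde\eta$ is trivial on $U_{v}\cap N(C_{F})$ is correct, and so is the treatment when $v$ is unramified in $F/L$. But the weak-approximation step in the ramified case is circular: replacing a representative $z$ with $N[z]=u$ by $z\beta^{-1}$ for $\beta\in F^{\times}$ does not change $\chi([z])$ (automorphy of $\chi$), so whatever control you gain on the local factors of $\chi$ at a finite set $S$ of places you simply transfer to the complement of $S$, where $\beta$ will generically fail to be a unit and the unramified characters $\chi_{w}$ can be nontrivial on uniformizers; the computation merely shuffles the unknown contribution around. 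What actually closes the loop is Artin reciprocity. Represent $u$ by an id\`ele $\hat u$ supported at $v$. The condition $u\in N(C_{F})$ says $\hat u$ lies in the kernel of the global Artin map, and since $\hat u$ is concentrated at $v$, the product formula for the norm residue symbol forces $\hat u_{v}$ to be a norm from $F_{w}^{\times}$ for $w\mid v$. Combined with the elementary local identity $N_{F_{w}/L_{v}}(F_{w}^{\times})\cap\mathcal{O}_{L_{v}}^{\times}=N_{F_{w}/L_{v}}(\mathcal{O}_{F_{w}}^{\times})$ (valid whether or not $w/v$ is ramified), this produces a unit $y\in\mathcal{O}_{F_{w}}^{\times}$ with $N(y)=\hat u_{v}$. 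Taking $z$ equal to $y$ at one place $w\mid v$ and $1$ at all other places gives $N[z]=u$ and $\chi([z])=\chi_{w}(y)=1$, which is exactly the local-global clearance you were reaching for.
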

\begin{proof}
We define at first $\eta_{0}:N_{\AF/\AL}(\AF)^{\times}\rightarrow \C$ as follows:
for any $w\in N_{\AF/\AL}(\AF)^{\times} $, take $z\in \AF^{\times}$ such that $w=N_{\AF/\AL}(z)$ and we define $\eta_{0}(w)=\chi(z)$. 

This does not depend on the choice of $z$. In fact, if $z'\in\AF^{\times}$ such that $N_{\AF/\AL}(z')$ also equals $w=N_{\AF/\AL}(z)$, then by Hilbert's theorem $90$ which says $H^{1}(Gal(F/L), \AF^{\times})=1$, there exists $t\in \AF^{\times}$ such that $z'=\cfrac{\sigma(t)}{t}z$ for some $\sigma\in Gal(F/L)$. Hence $\chi(z')=\cfrac{\chi^{\sigma}(t)}{\chi(t)}\chi(z)=\chi(z)$. Therefore $\eta_{0}(w)$ is well defined.
One can verify that $\eta_{0}$ is a continuous character.

By Hasse norm theorem, $N_{\AF/\AL}(\AF)^{\times}\cap L^{\times} =N_{\AF/\AL}(F)^{\times}$ (this is a direct corollary of Hilbert's theorem $90$ on $L^{\times}\backslash\AL^{\times}$ that $H^{1}(Gal(F/L), L^{\times}\backslash\AL^{\times})=\{1\}$), we know $\eta_{0}$ is trivial on $N_{\AF/\AL}(\AF)^{\times}\cap L^{\times}$, and hence factors through $(L^{\times}\cap N_{\AF/\AL}(\AF^{\times}))\backslash\N_{\AF/\AL}(\AF^{\times})$. The latter is a finite index open subgroup of $L^{\times}\backslash \AL^{\times}$ by the class field theory. We can thus extend $\eta_{0}$ to a Hecke character of $L$ as we want.
\end{proof}

\end{ex}

\bigskip

We now consider the base change for unitary groups and similitude unitary groups with respect to $K/\Q$. 

For each integer $0\leq s\leq n$, there exists a Hermitian space of dimension $n$ over $K$ with infinity sign $(n-s,s)$ such that the associated unitary group $U_{s}$ over $\Q$ is quasi-split at all finite place except for at most one finite space split in $K$ (c.f. section $2$ of \cite{clozelIHES} or section $1.2$ of \cite{harrislabesse}). 

We remark that $U_{s}(\AK) \cong GL_{n}(\AK)$. We can regard the regular algebraic cuspidal representation $\Pi$ as a representation of $U_{s}(\AK)$. We denote by $\Pi^{\vee}$ the contragredient representation of $\Pi$. We say $\Pi$ is \textbf{conjugate self-dual} if $\Pi^{c}\cong \Pi^{\vee}$. If $\Pi$ descends to $U_{s}(\AQ)$ then by the previous discussion we know that it is stable under $Aut(K)$-action. This is equivalent to say that $\Pi$ is conjugate self-dual.\\

If $n$ is odd, then we can take $U_{s}$ quasi-split at each finite place. In this case, the conjugate self-dual condition is also sufficient. 

If $n$ is even and $s$ has the same parity as $n/2$, it is in the same situation as the $n$ odd case. But if $n$ is even and $s$ does not have the same parity as $n/2$, the unitary group $U_{s}$ can not be quasi-split everywhere. In this case, we have to add a local condition (see the hypothesis below) so that $\Pi$ descends to $U_{s}(\AQ)$. Since we will need that $\Pi$ descends to $U_{s}(\AQ)$ for each $s$, we postulate the following hypothesis whenever $n$ is even:

\begin{hyp}\label{local condition}
The representation $\Pi$ is a discrete series representation at a finite place of $\Q$ which is split in $K$.
\end{hyp}

\begin{prop}(Theorem $3.1.2$, $3.1.3$ of \cite{harrislabesse})
Let $\Pi$ be a regular algebraic cuspidal representation of $GL_{n}(\AK)$ which is moreover conjugate self-dual. If $n$ is even, we assume that $\Pi$ satisfies Hypothesis \ref{local condition}. 

For each $0\leq s\leq n$, let $U_{s}$ be the unitary group as before. We know $\Pi$ descends to a cuspidal representation of $U_{s}(\AQ)$ for any $s$.
\end{prop}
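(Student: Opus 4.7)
The plan is to invoke Labesse's stable base change between conjugate self-dual cuspidal representations of $GL_{n}(\AK)$ and automorphic representations of the unitary groups $U_{s}$, which is proved by comparing the twisted trace formula for $GL_{n}/K$ with the stable trace formula for $U_{s}$. The argument splits into two cases according to whether $U_{s}$ is quasi-split at every finite place of $\Q$.

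First I would handle the quasi-split case (either $n$ odd, or $n$ even with $s\equiv n/2 \pmod{2}$). Here I would set up the comparison between the $\theta$-twisted trace formula for $GL_{n}/K$, where $\theta$ is the outer involution $g\mapsto {}^{t}g^{-c,-1}$, and the stable trace formula for $U_{s}$. The key local ingredients are the fundamental lemma for base change (Laumon--Ng{\^o} and its extensions) and the associated transfer of test functions, which together match orbital integrals on $GL_{n}(K_{v})$ with stable orbital integrals on $U_{s}(\Q_{v})$. The hypothesis that $\Pi^{c}\cong\Pi^{\vee}$ is precisely the condition that $\Pi$ is $\theta$-stable, so $\Pi$ contributes to the twisted spectral side. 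Matching contributions across the two trace formulas then yields a cuspidal representation $\pi$ of $U_{s}(\AQ)$ whose local unramified parameters at almost every place agree with those of $\Pi$, which is exactly the required descent.

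Next I would treat the remaining case $n$ even and $s\not\equiv n/2\pmod{2}$, where $U_{s}$ must fail to be quasi-split at some finite place $v_{0}$ that is split in $K$. Here one first descends $\Pi$ to the quasi-split inner form $U_{s}^{*}$ by the previous argument, and then transfers from $U_{s}^{*}$ to $U_{s}$ via a further trace formula comparison. Hypothesis \ref{local condition} is what makes this transfer effective: at the split place $v_{1}$ where $\Pi$ is discrete series, one can choose a pseudo-coefficient whose orbital integrals transfer correctly to both inner forms, and the simple form of the trace formula then shows the descent to $U_{s}(\AQ)$ is non-zero. Cuspidality of the descended representation follows because any non-cuspidal constituent would have base change induced from a proper parabolic of $GL_{n}(\AK)$, contradicting the cuspidality of $\Pi$; the regular algebraic condition on $\Pi_{\infty}$ also rules out residual contributions at infinity.

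The main obstacle is the full stabilization of the twisted trace formula together with the fundamental lemma underlying it; these are now available through the cumulative work of Arthur, Laumon--Ng{\^o}, Waldspurger, and M\oe glin, and are used as black boxes in \cite{harrislabesse}. The technical point specific to this proposition is the non-quasi-split case: one must verify that the test function built from a discrete series pseudo-coefficient at $v_{1}$ has non-vanishing trace against the descended packet on $U_{s}$, which is where Hypothesis \ref{local condition} is indispensable and explains why it cannot be removed by the present method.
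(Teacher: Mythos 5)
The paper does not prove this proposition; it is stated with the citation ``(Theorem 3.1.2, 3.1.3 of \cite{harrislabesse})'' and used as a black box. So there is no paper-internal argument to compare your sketch against. What can be said is whether your outline accurately reflects the cited Harris--Labesse result.

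Your general framework is correct: the descent is established by comparing the $\theta$-twisted trace formula for $\mathrm{Res}_{K/\Q}GL_n$ (where conjugate self-duality of $\Pi$ is exactly $\theta$-stability) with a trace formula for $U_s$, and the local ingredients are the base-change fundamental lemma together with transfer of orbital integrals. Two caveats, though. First, Harris--Labesse (2004) proved this \emph{conditionally} on versions of the fundamental lemma that were later supplied by Laumon--Ng\^o and others; your presentation treats these as already available, which is accurate for the present paper's purposes but worth flagging since the cited theorem numbers belong to the conditional formulation. Second, the route through the non-quasi-split case is not really a two-step descent (first to the quasi-split inner form $U_s^*$, then a Jacquet--Langlands-type transfer to $U_s$). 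Rather, Hypothesis \ref{local condition} is used to insert a pseudo-coefficient of the discrete series $\Pi_{v_1}$ at the split place $v_1$ so that a simple (Deligne--Kazhdan type) form of the trace formula applies to $U_s$ directly, and the non-quasi-split local factor of $U_s$ is placed precisely at $v_1$, where the local transfer to the inner form of $GL_n(\Q_{v_1})$ exists because $\Pi_{v_1}$ is discrete series. Your explanation of why Hypothesis \ref{local condition} is indispensable is otherwise on target. Cuspidality of the descended $\pi_s$ does follow roughly as you indicate, from cuspidality and regularity of $\Pi$. In short: plausible sketch of the cited theorem, with the structural detail of the non-quasi-split case rearranged relative to what Harris--Labesse actually do; but none of this affects the present paper, which only quotes the result.
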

\bigskip

We denote $GU_{s}$ the rational similitude unitary group associated to $U_{s}$. We have $GU_{s}(\AK)\cong GL_{n}(\AK)\otimes \AK^{\times}$.

We write $\xi_{\Pi}$ for the central character of $\Pi$. It is conjugate self-dual since $\Pi$ is. One can show that there exists $\xi$, an algebraic Hecke character of $\AK$, such that \begin{equation}\label{xi}
\cfrac{\xi(\overline{z})}{\xi(z)}=\xi_{\Pi}(z)^{-1}.
\end{equation} 
This is again due to Hilbert $90$. For detailed proof, we refer to Lemma $2.2$ of \cite{clozelramanujan} or Lemma $2.3.1$ of \cite{linthesis}. Hence, by a similar argument as Theorem $VI.2.9$ in \cite{harristaylor}, we can deduce that that $\Pi^{\vee}\otimes \xi$ descends to a cuspidal representation of $GU_{s}(\AQ)$, denoted by $\pi_{s}$.

We are about to consider cohomology space associated to $\pi_{s}$. We need to assume that $\Pi$ is \textbf{cohomological} so that $\pi_{s}$ is also cohomological.

Let $G_{\infty}$ be the group of real points of $Res_{K/\Q}GL_{n}$. Recall that $\Pi$ is cohomological if there exists $W$ an algebraic finite-dimensional representation of $G_{\infty}$ such that $H^{*}(\mathcal{G}_{\infty},K_{\infty}; \Pi_{\infty}\otimes W)\neq 0$ where $\mathcal{G}_{\infty}=Lie(G_{\infty})$ and $K_{\infty}$ is the product of a maximal compact group of $G_{\infty}$ and the center of $G_{\infty}$. We remark that a cohomological representation is automatically algebraic since its infinity type is determined by this algebraic finite-dimensional representation.

We write $W(\pi_{\infty})$ for the finite dimensional representation associated to $\pi_{s}$ by the cohomological property.

\bigskip
\subsection{Automorphic periods}

We will summarize the construction of automorphic periods in \cite{harris97} in this section. Firstly, we construct a Shimura datum on $GU_{s}$.

We define $X_{s}$ to be the $GU_{s}(\R)$ conjugate class of \begin{eqnarray}
\nonumber h_{s}: \mathbb{S}(\C)=\C^{\times}\times \C^{\times}&\rightarrow &GU_{s}(\C)\cong GL_{n}(\C)\times \C^{\times}\\\nonumber
(z,\overline{z})&\mapsto & (\begin{pmatrix}
zI_{r} & 0\\\nonumber
0 & \overline{z}I_{s}
\end{pmatrix},
z\overline{z}).
\end{eqnarray}

 We know that $(GU_{s},X_{s})$ is a Shimura datum and we denote by $Sh(GU_{s})$ the associated Shimura variety. The finite dimensional representation $W(\pi_{\infty})$ defines a complex local system $\mathcal{W}(\pi_{\infty})$ and $l$-adic local system $\mathcal{W}(\pi_{\infty})_{l}$ on $Sh(GU_{s})$. 
 
As shown in \cite{harris97}, the cohomology group $\overline{H}^{rs}(Sh(GU_{s}), \mathcal{W}(\pi_{\infty}))$ defined in section $2.2$ of \cite{harris97} is naturally endowed with a De Rham rational structure and a Betti rational structure over $K$ (c.f. Proposition $2.2.7$ of \textit{loc.cit}). The cohomology group $\overline{H}^{rs}(Sh(GU_{s}), \mathcal{W}(\pi_{\infty})_{l})$ is endowed with an $l$-adic structure. Moreover, $\pi_{f}$ contributes non trivially to these cohomology groups, i.e. $\overline{H}^{rs}(Sh(GU_{s}),*)[\pi_{f}]:=Hom_{G(\AK\text{}_{f})}(\pi_{f},\overline{H}^{rs}(Sh(GU_{s}),*)\neq 0$ for $*= \mathcal{W}(\pi_{\infty})$ or $\mathcal{W}(\pi_{\infty})_{l}$.

One direct consequence is that the rationality field of $\pi_{f}$ is a number field (see section $2.6$ of \cite{harris97}). One can then realize $\pi_{f}$ over $E(\pi)$, a finite extension of its rationality field, which is still a number field. We take $E(\Pi)$ a number field which contains the $E(\pi)$ for all $s$. We also assume that $E(\Pi)$ contains $K$ for simplicity.

\medskip

One can show that $\overline{H}^{rs}(Sh(GU_{s}),*)[\pi_{f}]$ for $*= \mathcal{W}(\pi_{\infty})$ or $\mathcal{W}(\pi_{\infty})_{l}$ form a motive with coefficients in $E(\Pi)$ (c.f. Proposition $2.7.10$ of \textit{loc.cit}). We denote it by $M_{s}(\Pi,\xi)$.

Since $\Pi$ is conjugate self-dual, there exists $<.>$ a non degenerate bilinear form on $\overline{H}^{rs}(Sh(GU_{s}),\mathcal{W}(\pi_{\infty}))[\pi_{f}]$ normalized as in section $2.6.8$ of \cite{harris97}.

\bigskip

The Hodge decomposition of $\overline{H}^{rs}(Sh(GU_{s}),\mathcal{W}(\pi_{\infty}))[\pi_{f}]$ is given by the coherent cohomology. More precisely, let $Sh(GU_{s})\hookrightarrow \widetilde{Sh}(GU_{s})$ be a smooth toroidal compactification. Let $\pi'_{\infty}$ be any discrete series representation of $GU_{s}(\R)$ with base change $\Pi^{\vee}_{\infty}$. It is then cohomological and we denote $\widetilde{\mathcal{E}}(\pi'_{\infty})$ the coherent automorphic vector bundle attached to the finite dimensional representation associated to $\pi'_{\infty}$. We have \begin{equation}\nonumber 
\overline{H}^{rs}(Sh(GU_{s}),\mathcal{W}(\pi_{\infty}))[\pi_{f}]=\bigoplus\limits_{\pi_{\infty}'}\widetilde{H}^{q(\pi_{\infty}')}(\widetilde{Sh}(GU_{s}),\widetilde{\mathcal{E}}(\pi'_{\infty}))[\pi_{f}]
\end{equation} where $\widetilde{H}$ indicates the coherent cohomology and $q(\pi_{\infty}')$ is an integer depends on $\pi_{\infty}'$. 

Among these $\pi_{\infty}'$, there exists a holomorphic representation $\pi'_{\infty}$ such that $q(\pi_{\infty}')=0$. We fix this $\pi_{\infty}'$ and choose a $K$-rational element \begin{equation}\nonumber
0\neq \beta_{s}\in \widetilde{H}^{q(\pi_{\infty}')=0}(\widetilde{Sh}(GU_{s}),\widetilde{\mathcal{E}}(\pi'_{\infty}))[\pi_{f}].
\end{equation}

At last, we take the integer $C$ such that $\xi_{\infty}(t)=t^{C}$ for $t\in \R^{+}$. We define the \textbf{automorphic period} of $(\Pi,\xi)$ to be the normalized Petersson inner product:
\begin{equation}\nonumber
P^{(s)}(\Pi,\xi):=(2\pi)^{-C}<\beta_{s},\beta_{s}>.
\end{equation} It is a non zero complex number. By the following proposition, we see that $P^{(s)}(\Pi,\xi)$ does not depend on the choice of $\beta_{s}$ modulo $E(\pi)^{\times}$ and thus well defined modulo $E(\pi)^{\times}$.
\begin{prop}(Proposition $3.19$ in \cite{harrisimrn})

Let $\beta_{s}'\in \widetilde{H}^{q(\pi_{\infty}')=0}(\widetilde{Sh}(GU_{s}),\widetilde{\mathcal{E}}(\pi'_{\infty}))[\pi_{f}]$ be another $K$-rational element. We have $\cfrac{<\beta_{s},\beta_{s}'>}{<\beta_{s},\beta_{s}>}\in E(\pi)$. 

Consequently, $<\beta_{s},\beta_{s}>\sim_{E(\pi)}<\beta_{s}',\beta_{s}'>$.
\end{prop}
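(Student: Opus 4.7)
The plan is to reduce the claim to a one-dimensionality and rationality statement about the Hecke eigenspace $\widetilde{H}^{q(\pi_\infty')=0}(\widetilde{Sh}(GU_s),\widetilde{\mathcal{E}}(\pi'_\infty))[\pi_f]$. First, I would invoke strong multiplicity one for cuspidal representations of $GU_s(\AQ)$ together with the coherent cohomology interpretation recalled in the paragraph above, to conclude that this multiplicity space is one-dimensional over $\mathbb{C}$. Second, I would equip this line with an $E(\pi)$-rational structure by combining the $K$-rational structure on the ambient coherent cohomology (inherited from the fact that the Shimura variety and $\widetilde{\mathcal{E}}(\pi'_\infty)$ are defined over $K$) with the $E(\pi)$-rational structure on $\pi_f$: extending scalars to $E(\pi)$ and passing to the $\pi_f$-eigenspace for the Hecke action produces a one-dimensional $E(\pi)$-subspace $V_{E(\pi)}$ whose scalar extension to $\mathbb{C}$ recovers the full complex line.

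Third, since $K\subset E(\pi)$ by our choice of $E(\pi)$, both $\beta_s$ and $\beta_s'$, being $K$-rational elements of the ambient coherent cohomology that lie in $[\pi_f]$, belong to $V_{E(\pi)}$. As two nonzero vectors in a one-dimensional $E(\pi)$-vector space, there is a unique $\alpha\in E(\pi)^{\times}$ with $\beta_s'=\alpha\,\beta_s$. The pairing $\langle\cdot,\cdot\rangle$ normalized as in Section $2.6.8$ of \cite{harris97} restricts to an $E(\pi)$-bilinear form on $V_{E(\pi)}\otimes V_{E(\pi)}$ (should one prefer the sesquilinear convention coming from the Petersson product, the conjugate self-duality of $\Pi$ forces $E(\pi)$ to be stable under complex conjugation and the argument still goes through). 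Bilinearity then gives
\[
\frac{\langle\beta_s,\beta_s'\rangle}{\langle\beta_s,\beta_s\rangle}=\alpha\in E(\pi),
\qquad
\frac{\langle\beta_s',\beta_s'\rangle}{\langle\beta_s,\beta_s\rangle}=\alpha^{2}\in E(\pi)^{\times},
\]
which proves both assertions simultaneously.

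The main obstacle is step two: pinning down the $E(\pi)$-rational structure on the Hecke eigenspace and verifying that any $K$-rational element of the ambient coherent cohomology whose Hecke eigensystem is that of $\pi_f$ automatically lies in $V_{E(\pi)}$. This rests on the compatibility of Hecke correspondences (defined over $\mathbb{Q}$) with the $K$-rational structure of coherent cohomology on $\widetilde{Sh}(GU_s)$, together with the realization of $\pi_f$ over $E(\pi)$ recalled from \cite{clozelaa}. Once this compatibility is in place, the remainder is linear algebra in a one-dimensional $E(\pi)$-vector space.
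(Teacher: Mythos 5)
Your reconstruction takes the natural route and matches what the cited Proposition $3.19$ of \cite{harrisimrn} does; the present paper itself gives no proof (it delegates to that reference), so there is no in-paper argument to compare against, but the shape of your argument --- one-dimensionality of the multiplicity line, an $E(\pi)$-rational structure on it compatible with the $K$-rational structure on coherent cohomology and the $E(\pi)$-structure on $\pi_f$, then bilinearity --- is the intended one.

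Two small points. First, the one-dimensionality over $\C$ does not come from \emph{strong} multiplicity one (a statement about agreement at almost all places) but from multiplicity one of $\pi'_\infty\otimes\pi_f$ in the cuspidal spectrum of $GU_s(\AQ)$ together with the one-dimensionality of the relevant coherent (or $(\mathfrak p^-,K_\infty)$-) cohomology of the holomorphic discrete series $\pi'_\infty$ in degree $q(\pi'_\infty)=0$; your phrasing conflates these, though the conclusion is the same. Second, the paper already stipulates that $\langle\,,\,\rangle$ is a nondegenerate \emph{bilinear} form (normalized as in \S2.6.8 of \cite{harris97}, using conjugate self-duality of $\Pi$ to trade the sesquilinear Petersson pairing for a bilinear one), so the parenthetical worry about a sesquilinear convention is unnecessary: from $\beta_s'=\alpha\beta_s$ with $\alpha\in E(\pi)^\times$ one reads off $\langle\beta_s,\beta_s'\rangle/\langle\beta_s,\beta_s\rangle=\alpha$ and $\langle\beta_s',\beta_s'\rangle=\alpha^2\langle\beta_s,\beta_s\rangle$ directly. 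You correctly flag the genuine content --- that a $K$-rational vector lying in the $\pi_f$-eigenline automatically lies in the $E(\pi)$-rational line, via Hecke-equivariance of the $K$-structure and the fact that $K\subset E(\pi)$ --- as the crux; that is precisely what \cite{harrisimrn} establishes.
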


\begin{rem}
We may furthermore choose $\beta_{s}$ such that $<\beta_{s},\beta_{s}>$ is equivariant under action of $G_{K}$.
\end{rem}

Actually, this period $P^{(s)}(\Pi,\xi)$ is also independent of the choice of $\xi$. This is a corollary to Theorem $3.5.13$ of \cite{harris97}. We can also deduce it from our main theorem in the next section. At the moment we just fix a $\xi$ and define the ($s$-th) \textbf{automorphic period} of $\Pi$ by $P^{(s)}(\Pi):=P^{(s)}(\Pi,\xi)$.

\bigskip
\subsection{An automorphic variant of the Deligne conjecture}
We assume that the motive associated to $\Pi$ exists and denote it by $M:=M(\Pi)$. Similarly, for the Hecke character $\xi$, we denote by $M(\xi)$ the associated motive.

Recall that $\pi_{s}$ has base change $\Pi^{\vee}\otimes \xi$. It is expected, if we admit the Tate conjecture, that:
\begin{equation}\label{motive relation}
M_{s}(\Pi,\xi) \cong \Lambda^{n-s} (M(\Pi)^{\vee}) \otimes M(\xi)
\end{equation}
up to twist by a Tate motive.

We have taken $\beta_{s}$ in the bottom stage of the Hodge filtration. Therefore, we should have:
\begin{equation}\label{compare of two periods 1}
P^{(s)}(\Pi) \sim_{E(\Pi)} (Q_{1}(M^{\vee})Q_{2}(M^{\vee})\cdots Q_{n-s}(M^{\vee})) \times Q_{1}(M(\xi)).
\end{equation}

Recall that $\cfrac{\xi^{c}}{\xi}=\xi_{\Pi}^{-1}$ by equation (\ref{xi}). The right hand side in fact equals $Q^{(s)}(M)$. We shall give a lemma first and then prove this property.
\begin{lem}(Lemma $1.2.7$ of \cite{harrisadjoint})
Under the condition that $\Pi$ is conjugate self-dual, we have:
\begin{equation}
\delta(M)^{-2} (2\pi i)^{n(1-n)} \sim_{E} \prod\limits_{1\leq i\leq n}Q_{i}.
\end{equation}
\end{lem}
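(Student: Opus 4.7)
The plan is to exploit the conjugate self-duality on the motivic side and reduce the identity to a comparison between determinant periods obtained by two different routes. Assuming Conjecture \ref{conjmotive}, the condition $\Pi^{c}\cong \Pi^{\vee}$ translates into a motivic isomorphism of the form
\begin{equation}
M^{c}\cong M^{\vee}(1-n),
\end{equation}
where the Tate twist $1-n$ is forced by matching the $L$-function normalization $L(s,M,\sigma)=L(s+\tfrac{1-n}{2},\Pi^{\sigma})$ and by comparing the Hodge types of $M(\Pi^{\vee})$ with $M(\Pi)^{\vee}$. I would begin by establishing (or recording) this isomorphism explicitly from the Hodge-type bookkeeping, since this is the one step that actually uses the conjugate self-dual hypothesis on $\Pi$.

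Next I would take determinant periods of both sides of $M^{c}\cong M^{\vee}(1-n)$ and compute each side by the tools already assembled. On the left, Lemma \ref{delta c} gives
\begin{equation}
\delta(M^{c})\sim_{E}\Bigl(\prod_{1\leq i\leq n}Q_{i}(M)\Bigr)\delta(M).
\end{equation}
On the right, Example \ref{Tate lemma} yields $\delta(M^{\vee}(1-n))\sim_{E;K}(2\pi i)^{n(1-n)}\delta(M^{\vee})$, and a direct calculation using dual bases (the matrix of periods for $M^{\vee}$ is the inverse transpose of that for $M$, so its determinant is $\delta(M)$) gives $\delta(M^{\vee})\sim_{E}\delta(M)^{-1}$. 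Combining these,
\begin{equation}
\delta(M^{c})\sim_{E;K}(2\pi i)^{n(1-n)}\delta(M)^{-1}.
\end{equation}
Equating the two expressions for $\delta(M^{c})$ and rearranging gives precisely
\begin{equation}
\prod_{1\leq i\leq n}Q_{i}(M)\sim_{E}\delta(M)^{-2}(2\pi i)^{n(1-n)}.
\end{equation}

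The only genuine obstacle is justifying $M^{c}\cong M^{\vee}(1-n)$ from the conjugate self-duality of $\Pi$: this requires the existence of the motive (Conjecture \ref{conjmotive}) plus a compatibility between the complex conjugation on motives over $K$ and the operation $\Pi\mapsto \Pi^{c}$, which I would verify by matching Hodge types as follows. If $\Pi$ has infinity type $(z^{a_{i}}\overline{z}^{-w_{\Pi}-a_{i}})$, then $M$ has Hodge type $(p_{i},q_{i})=(-a_{i}+\tfrac{n-1}{2},\,w_{\Pi}+a_{i}+\tfrac{n-1}{2})$; the Hodge type of $M(\Pi^{\vee})$ is obtained by negating $a_{i}$, while that of $M(\Pi)^{\vee}$ is $(-p_{i},-q_{i})$, and one checks directly that these differ by a shift of $(1-n,1-n)$, forcing the Tate twist. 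Everything else is a bookkeeping exercise with the $\sim_{E}$-equivalence, noting that the $2\pi i$ factor is absorbed in $\sim_{E;K}$ versus $\sim_{E}$ up to standard reasoning about $K$-rationality in the coefficient field $E$.
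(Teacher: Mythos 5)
Your proof is correct and follows essentially the same route as the paper's: both combine Lemma \ref{delta c} with the identities $\delta(M^{\vee})\sim_{E}\delta(M)^{-1}$, $M^{c}\cong M^{\vee}(1-n)$, and the Tate-twist formula for $\delta$, then compare the two expressions for $\delta(M^{c})$. The only difference is that you spell out the Hodge-type bookkeeping that forces the twist $(1-n)$, a step the paper takes for granted.
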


\begin{dem}
Recall by Lemma \ref{delta c} that 
\begin{equation}\label{first relation}
\delta(M^{c})\sim_{E}(\prod\limits_{1\leq i\leq n}Q_{i})\delta(M).
\end{equation}

On one hand, the comparison isomorphism for $M^{\vee}$ is the inverse of the dual of the comparison isomorphism for $M$. Hence $\delta(M^{\vee}) \sim_E \delta(M)^{-1}$. This is true for all motives.

On the other hand, since $\Pi$ is conjugate self-dual, we have:
$M^{c} \cong M^{\vee}(1-n).$ This implies that
\begin{equation}
\delta(M^{c}) \sim_{E} \delta(M^{\vee})(2\pi i)^{n(1-n)}.
\end{equation}

We then deduce that \begin{equation}
\delta(M^{c})\sim_{E} \delta(M)^{-1} (2\pi i)^{n(1-n)}.
\end{equation}

We compare this with equation (\ref{first relation}) and get:
\begin{equation}
\delta(M)^{-1} (2\pi i)^{n(1-n)} \sim_{E} (\prod\limits_{1\leq i\leq n}Q_{i})\delta(M).
\end{equation}
The lemma then follows.
\end{dem}

We can now prove the following:
\begin{prop}\label{comparison with GH}
Let $M=M(\Pi)$. Let $\xi$ be a Hecke character of $\AK$ such that $\cfrac{\xi^{c}}{\xi}=\xi_{\Pi}^{-1}$. We then have:
\begin{equation}\label{compare of two periods 2}
Q^{(s)}(M)\sim_{E(\Pi);K}(Q_{1}(M^{\vee})Q_{2}(M^{\vee})\cdots Q_{n-s}(M^{\vee})) \times Q_{1}(M(\xi))
\end{equation}
\end{prop}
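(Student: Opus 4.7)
The plan is to unfold the left-hand side $Q^{(s)}(M)$ using the definitions, then reduce systematically to the shape of the right-hand side by applying in sequence: (i) the identity $\prod_{i=1}^n Q_i(M)\sim_E \delta(M)^{-2}(2\pi i)^{n(1-n)}$ just proved in the preceding lemma, (ii) Lemma \ref{period for Mc}, which converts $Q_j(M)$ into $Q_{n+1-j}(M^c)^{-1}$, and (iii) the conjugate self-duality $M^c\cong M^\vee(1-n)$, to pass from $M^c$-periods to $M^\vee$-periods.

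Concretely, one writes
\begin{equation*}
Q^{(s)}(M)=\Bigl(\prod_{j=1}^s Q_j(M)\Bigr)(2\pi i)^{n(n-1)/2}\delta(M),
\end{equation*}
and replaces $\prod_{j=1}^s Q_j(M)$ by $\delta(M)^{-2}(2\pi i)^{n(1-n)}/\prod_{j=s+1}^n Q_j(M)$ using the preceding lemma. The exponent simplifies via $n(1-n)+n(n-1)/2=-n(n-1)/2$, giving
\begin{equation*}
Q^{(s)}(M)\sim_E \frac{\delta(M)^{-1}(2\pi i)^{-n(n-1)/2}}{\prod_{j=s+1}^n Q_j(M)}.
\end{equation*}
Lemma \ref{period for Mc} then rewrites $\prod_{j=s+1}^n Q_j(M)^{-1}$ as $\prod_{k=1}^{n-s}Q_k(M^c)$. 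A direct check that Tate twists over $K$ leave each $Q_k$ invariant up to rational factors (the sign of $F_\infty$ on $\Z(r)_B$ cancels with the complex conjugation on the coefficients in $E\otimes\C$), combined with $M^c\cong M^\vee(1-n)$, yields $Q_k(M^c)\sim_E Q_k(M^\vee)$. Collecting everything,
\begin{equation*}
Q^{(s)}(M)\sim_E \delta(M)^{-1}(2\pi i)^{-n(n-1)/2}\prod_{k=1}^{n-s}Q_k(M^\vee).
\end{equation*}

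It then remains to identify $Q_1(M(\xi))$ with $\delta(M)^{-1}(2\pi i)^{-n(n-1)/2}$ modulo $E(\Pi)\otimes K$. The hypothesis $\xi^c/\xi=\xi_\Pi^{-1}$ is the motivic avatar of the conjugate self-duality of the central character of $\Pi$ and translates into an isomorphism $M(\xi)^c\otimes M(\xi)\cong M(\xi_\Pi)^{-1}$. Since $\xi_\Pi$ is the central character of $\Pi$, the motive $M(\xi_\Pi)$ is identified with $\det M$ up to a Tate twist prescribed by the infinity type. Combined with Lemma \ref{determinant motive lemma} ($\delta(M)\sim_E\delta(\det M)$ and $Q_1(\det M)\sim_E\prod_iQ_i(M)$) and the rank-one version of the preceding lemma applied to $\det M$, one can express $\delta(M)^{-1}(2\pi i)^{-n(n-1)/2}$ in terms of $Q_1(\det M)$, and hence in terms of $Q_1(M(\xi))$ by taking square roots in the rank-one setting. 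The ambiguity in the choice of $\xi$, which is determined only up to characters trivial on norms from $K$, produces at worst factors in $K^\times$ and accounts for the weaker equivalence $\sim_{E(\Pi);K}$.

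The hard step is the last rank-one identification: one must track precisely the Tate twists arising from the normalization shift $(1-n)/2$ in Conjecture \ref{conjmotive} and from the relation between $\xi$ and $\det M$. Everything preceding this point is a formal application of Lemmas \ref{period for Mc}, \ref{delta c}, \ref{determinant motive lemma}, and the just-proved identity, together with the Tate-twist invariance of the $Q_k$.
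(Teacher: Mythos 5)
Your reductions up through the point where the proposition becomes equivalent to
\begin{equation*}
Q_1(M(\xi))^{-1}\sim_{E(\Pi);K}(2\pi i)^{n(n-1)/2}\delta(M)
\end{equation*}
match the paper's proof (the paper runs the same chain, using Lemma \ref{period for Mc} together with the Tate-twist invariance of the $Q_i$ — which you spell out, and the paper leaves implicit — and the preceding lemma $\prod_i Q_i\sim_E\delta(M)^{-2}(2\pi i)^{n(1-n)}$). That part is fine.

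The final step is where there is a genuine gap. Your ``square root'' argument establishes at best that the two sides of the displayed relation have the same square modulo $E^\times$: indeed $\bigl(\delta(M)(2\pi i)^{n(n-1)/2}\bigr)^2\sim_E Q_1(\det M)^{-1}$ by the preceding lemma together with $\delta(M)\sim\delta(\det M)$, and one can check $Q_1(\det M)\sim Q_1(M(\xi_\Pi))\sim Q_1(M(\xi))^2$ using $\xi_\Pi=\xi/\xi^c$, Lemma \ref{period for Mc} for rank one, and $Q_1(N^\vee)\sim Q_1(N)^{-1}$. But knowing $a^2\sim_E b^2$ only pins $a/b$ down up to an element whose square is in $E^\times$; ``taking square roots in the rank-one setting'' is not a legitimate move in $(E\otimes\C)^\times/E^\times$, so this does not close the argument. (There is also a slip: from $\xi^c/\xi=\xi_\Pi^{-1}$ you should get $M(\xi)^c\otimes M(\xi)^\vee\cong M(\xi_\Pi)^\vee$, not $M(\xi)^c\otimes M(\xi)$.) The paper itself does not prove the last identity $\delta(M(\xi_\Pi))\sim Q_1(M(\xi))^{-1}$ in this text; it first rewrites $\delta(M)(2\pi i)^{n(n-1)/2}\sim_{E(\Pi);K}\delta(M(\xi_\Pi))$ using $\det M(\Pi)\cong M(\xi_\Pi)(-n(n-1)/2)$ and equation \eqref{Tate lemma}, and then refers to the CM-period computations of Blasius as in \cite{harrisappendix} and \cite{linthesis} for $\delta(M(\xi_\Pi))\sim_{E(\Pi)}Q_1(M(\xi))^{-1}$. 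Note, incidentally, that this last identity can be obtained without squaring: from $M(\xi_\Pi)^\vee\cong M(\xi)^c\otimes M(\xi)^\vee$ one gets $\delta(M(\xi_\Pi))^{-1}\sim\delta(M(\xi)^c)\,\delta(M(\xi))^{-1}$, and Lemma \ref{delta c} applied to the rank-one motive $M(\xi)$ gives $\delta(M(\xi)^c)\sim Q_1(M(\xi))\,\delta(M(\xi))$, hence $\delta(M(\xi_\Pi))\sim Q_1(M(\xi))^{-1}$ directly — this is the argument you should have used in place of the square-root step.
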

\begin{dem}
Recall that 
\begin{equation}
Q^{(s)}(M)=Q_{(s)}(M)\Delta(M)=(Q_{1}(M)Q_{2}(M)\cdots Q_{s}(M))(2\pi i)^{\frac{n(n-1)}{2}}\delta(M)
\end{equation} by equation (\ref{Delta}) and equation (\ref{period Q}).

Moreover, by Lemma \ref{period for Mc} , we have:
\begin{equation}
Q_{1}(M^{\vee})Q_{2}(M^{\vee})\cdots Q_{n-s}(M^{\vee})  \sim_{E(\Pi)} Q_{s+1}(M)^{-1}Q_{s+2}(M)^{-1}\cdots Q_{n}(M)^{-1}. \label{first reverse}
\end{equation}

By the previous lemma, we have:
\begin{eqnarray}\nonumber
&&(Q_{s+1}(M)^{-1}Q_{s+2}(M)^{-1}\cdots Q_{n}(M)^{-1}) \times Q_{1}(M(\xi))\\\nonumber
&\sim_{E}& Q_{1}(M)Q_{2}(M)\cdots Q_{s}(M) (2\pi i)^{n(n-1)}\delta(M)^{2}  \times Q_{1}(M(\xi))\\\nonumber
&\sim_{E}& (Q_{(s)}(M) (2\pi i)^{\frac{n(n-1)}{2}}\delta(M))(2\pi i)^{\frac{n(n-1)}{2}}\delta(M)  \times Q_{1}(M(\xi))\\\nonumber
&\sim_{E}& Q^{(s)}(M)(2\pi i)^{\frac{n(n-1)}{2}}\delta(M)  \times Q_{1}(M(\xi)).
\end{eqnarray}

%%%%%%%%%%%%%

Therefore it remains to show that $Q_{1}(M(\xi))^{-1}\sim_{E(\Pi);K} (2\pi i)^{\frac{n(n-1)}{2}}\delta(M)$.

Since $det(M(\Pi))\cong M(\xi_{\Pi})(-\cfrac{n(n-1)}{2})$, by equation (\ref{Tate lemma}), we have 
\begin{equation}
\delta(M)\sim_{E(\Pi);K }\delta(det(M(\Pi)))\sim_{E(\Pi);K}\delta(M(\xi_{\Pi}))(2\pi i)^{-\frac{n(n-1)}{2}}.
\end{equation} 

At last, we recall that $\cfrac{\xi^{c}}{\xi}=\xi_{\Pi}^{-1}$. We can show $\delta(M(\xi_{\Pi})) \sim_{E(\Pi)}Q_{1}(M(\xi))^{-1}$ with the help of CM periods and Blasius's result on special values of $L$-functions for Hecke characters. We refer to the appendix of \cite{harrisappendix} for the notation and section $6.4$ of \cite{linthesis} for the proof.

\end{dem}

We now consider critical values of automorphic $L$-functions for tensor product of two cuspidal representations. As predicted by Conjecture \ref{Deligne automorphic}, they should be able to interpreted in terms of automorphic periods.

More precisely, let $n'$ be a positive integer and $\Pi'$ be an conjugate self-dual cohomological cuspidal regular representation of $GL_{n'}(\AK)$. If $n'$ is even, we assume that $\Pi'$ satisfies Hypothesis \ref{local condition}.

We denote the infinity type of $\Pi$ (resp. $\Pi'$) by $(z^{a_{i}}\overline{z}^{-\omega(\Pi)-a_{i}})_{1\leq i\leq n}$ (resp. $(z^{b_{j}}\overline{z}^{-\omega(\Pi)-b_{j}})_{1\leq j\leq n'}$) with $a_{1}>a_{2}>\cdots>a_{n}$ (resp. $b_{1}>b_{2}>\cdots>b_{n'}$). We suppose $\Pi\times \Pi'$ is critical, i.e. $a_{i} +b_{j}\neq -\cfrac{\omega(\Pi)+\omega(\Pi')}{2}$ for all $i,j$. Otherwise $\Pi\times \Pi'$ does not have critical values.

Let $m\in \Z+\cfrac{n+n'}{2}$. We say $m$ is \textbf{critical} for $\Pi\times \Pi'$ if $m+\cfrac{n+n'-2}{2}$ is critical for $R(M(\Pi)\otimes M(\Pi'))$. Recall that critical points are determined by the Hodge type as in Lemma \ref{critical existence}. Moreover, the Hodge type of $R(M(\Pi)\otimes M(\Pi'))$ is determined by the infinity type of $\Pi$ and $\Pi'$. Therefore, even if $M(\Pi)$ or $M(\Pi')$ does not exist, we may still define critical points for $\Pi\times \Pi'$ in this way. 

We can also give an explicit criteria on critical points by Lemma \ref{critical existence}. We recall that the motive $M(\Pi)$ associated to $\Pi$, if it exists, should have Hodge type $(-a_{n+1-i}+\frac{n-1}{2}, a_{n+1-i}+\omega(\Pi)+\frac{n-1}{2})_{1\leq i\leq n}$ by Conjecture \ref{conjmotive}.

\begin{lem} 
Let $m\in \Z+\cfrac{n+n'}{2}$. It is critical for $\Pi\times \Pi'$ if and only if for any $1\leq i\leq n$ and any $1\leq j\leq n'$, if $a_{i}+b_{j}>-\cfrac{\omega(\Pi)+\omega(\Pi')}{2}$ then $-a_{i}-b_{j}<m<a_{i}+b_{j}+\omega(\Pi)+\omega(\Pi')+1$; otherwise $a_{i}+b_{j}+\omega(\Pi)+\omega(\Pi')<m<-a_{i}-b_{j}+1$.
In particular, critical point always exists if $\Pi\times \Pi'$ is critical.
\end{lem}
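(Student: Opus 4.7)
My approach is to translate the criticality condition on $m$ into a criticality condition on $m + \tfrac{n+n'-2}{2}$ for the motive $R(M(\Pi)\otimes M(\Pi'))$ (which is how ``critical'' was defined in the text), and then apply the explicit characterization provided by Lemma \ref{critical existence}. The only input needed is the Hodge type, and this is determined by the infinity types of $\Pi$ and $\Pi'$ via Conjecture \ref{conjmotive}, independently of whether the motives themselves are known to exist.

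First I would write down the Hodge type of $R(M(\Pi)\otimes M(\Pi'))$. By Conjecture \ref{conjmotive}, the putative $M(\Pi)$ has Hodge type $(p_i,q_i)$ with $p_i=-a_{n+1-i}+\tfrac{n-1}{2}$ and $q_i=a_{n+1-i}+\omega(\Pi)+\tfrac{n-1}{2}$, and similarly $M(\Pi')$ has Hodge type $(r_j,s_j)$. Since $R(M\otimes M')$ has Betti realization $(M\otimes M')_B\oplus(M^c\otimes M'^c)_B$ and since the Hodge type of $M^c$ is obtained from that of $M$ by swapping entries via $(p^c_k,q^c_k)=(q_{n+1-k},p_{n+1-k})$, the Hodge types of $R(M\otimes M')$ consist of the pairs $(P_{i,j},Q_{i,j})$ together with their swaps $(Q_{i,j},P_{i,j})$, where after re-indexing
\[
P_{i,j}=-a_i-b_j+\tfrac{n+n'-2}{2}, \qquad Q_{i,j}=a_i+b_j+\omega(\Pi)+\omega(\Pi')+\tfrac{n+n'-2}{2}.
\]

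Next I apply Lemma \ref{critical existence} to $R(M\otimes M')$ with the shift $m\mapsto m+\tfrac{n+n'-2}{2}$. For each pair $(i,j)$ one has $P_{i,j}<Q_{i,j}$ iff $a_i+b_j>-\tfrac{\omega(\Pi)+\omega(\Pi')}{2}$, and in that case the condition $P_{i,j}<m+\tfrac{n+n'-2}{2}<Q_{i,j}+1$ simplifies by direct substitution to $-a_i-b_j<m<a_i+b_j+\omega(\Pi)+\omega(\Pi')+1$. In the opposite case the pair with first coordinate smaller is the swap $(Q_{i,j},P_{i,j})$, and the analogous simplification yields $a_i+b_j+\omega(\Pi)+\omega(\Pi')<m<-a_i-b_j+1$. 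The hypothesis that $\Pi\times\Pi'$ is critical excludes equality $a_i+b_j=-\tfrac{\omega(\Pi)+\omega(\Pi')}{2}$, so these two cases exhaust all Hodge pairs and reproduce the stated criterion exactly. The ``in particular'' clause follows at once: the criticality hypothesis is equivalent to $R(M\otimes M')$ having no $(\omega/2,\omega/2)$-class, and Lemma \ref{critical existence} then guarantees existence of a critical integer.

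The computation is entirely routine bookkeeping. The only mildly delicate point is keeping the index conventions straight when passing between the ordered infinity-type indexing $a_1>\cdots>a_n$ and the ordered Hodge-type indexing $p_1>\cdots>p_n$ dictated by Conjecture \ref{conjmotive} (which reverses order via $i\leftrightarrow n+1-i$); but since the stated condition is symmetric as $(i,j)$ ranges over $\{1,\dots,n\}\times\{1,\dots,n'\}$, this re-indexing is harmless.
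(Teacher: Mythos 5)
Your proof is correct and carries out exactly the computation that the surrounding text of the paper signals (the paper states the lemma without writing out the bookkeeping): translate criticality of $m$ to criticality of $m+\tfrac{n+n'-2}{2}$ for $R(M(\Pi)\otimes M(\Pi'))$, work out the Hodge type of that motive from the infinity types via the formula near Conjecture~2.4, and apply Lemma~1.1. The index shift, the two cases according to the sign of $a_i+b_j+\tfrac{\omega(\Pi)+\omega(\Pi')}{2}$, and the derivation of the ``in particular'' clause from the last sentence of Lemma~1.1 are all handled correctly.
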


\bigskip

We now define the split index for automorphic pairs. We split the sequence $b_{1}>b_{2}>\cdots >b_{n'}$ by the numbers $-a_{n}-\cfrac{\omega(\Pi)+\omega(\Pi')}{2}>-a_{n-1}-\cfrac{\omega(\Pi)+\omega(\Pi')}{2}>\cdots>-a_{1}-\cfrac{\omega(\Pi)+\omega(\Pi')}{2}$. We denote the length of each part by $sp(j,\Pi;\Pi')$ for $0\leq j\leq n$ and call them the \textbf{split indices} for automorphic pairs. We may define $sp(k,\Pi';\Pi)$ for $0\leq k\leq n'$ symmetrically.
 It is easy to see that $sp(j,M(\Pi);M(\Pi'))=sp(j,\Pi;\Pi')$ for all $0\leq j\leq n$ and $sp(k,M(\Pi');M(\Pi))=sp(k,\Pi';\Pi)$ for all $0\leq k\leq n'$.

Finally, since $L(m,\Pi\times\Pi')=L(m+\cfrac{n+n'-2}{2},R(M(\Pi)\otimes M(\Pi')))$, Conjecture \ref{Deligne automorphic} then predicts an automorphic variant of the Deligne conjecture:

\begin{conj}\label{Deligne automorphic 2}
Let $\Pi$ and $\Pi'$ be as before. If $m\in \Z+\cfrac{n+n'}{2}$ be critical for $\Pi\times \Pi'$ then we have:
\begin{equation}\label{equation automorphic}
L(m,\Pi\times \Pi') \sim_{E(\Pi)E(\Pi');K}(2\pi i)^{nn'm}\prod\limits_{j=0}^{n}P^{(j)}(\Pi)^{sp(j,\Pi;\Pi')}
\prod\limits_{k=0}^{n'}P^{(k)}(\Pi')^{sp(k,\Pi';\Pi)}.\nonumber
\end{equation}
\end{conj}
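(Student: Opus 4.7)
The plan is to prove Conjecture \ref{Deligne automorphic 2} directly on the automorphic side, bypassing the conjectural existence of the motives $M(\Pi), M(\Pi')$, though keeping the motivic picture of Section 1 as a guide. The skeleton generalizes the strategies of \cite{harris97} (for $n'=1$) and \cite{harrismotivic} (for $n'=n-1$), and is the object of ongoing work of the author with Grobner and Harris. As announced in the introduction, the key analytic input is the Ichino-Ikeda conjecture on unitary groups.

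First, I would use base change to descend $\Pi$ and $\Pi'$ to cuspidal representations $\pi_s$ and $\pi'_t$ on suitable similitude unitary groups $GU_s$ and $GU'_t$ (for all admissible signatures $(n-s,s)$ and $(n'-t,t)$, using Hypothesis \ref{local condition} when $n$ or $n'$ is even). The indices $(s,t)$ that will actually contribute to the chosen critical value $m$ are dictated by the combinatorics of infinity types: they are precisely those for which the signature condition on $GU_s\times GU'_t$ matches the Hodge type determined by the pair $m$, which is the automorphic counterpart of the splitting of the sequence $\{b_j\}$ by $\{-a_i - (\omega(\Pi)+\omega(\Pi'))/2\}$ used to define $sp(j,\Pi;\Pi')$.

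Second, for each such $(s,t)$, I would realize $L(m,\Pi\times\Pi')$ via a Rankin-Selberg-type integral on $GU_s\times GU'_t$ (for example the Piatetski-Shapiro--Rallis doubling integral, or an appropriate Gan-Gross-Prasad period), and apply Ichino-Ikeda to turn this integral into the Petersson inner products entering the definition of $P^{(s)}(\Pi)$ and $P^{(t)}(\Pi')$. Representing $\pi_s$ and $\pi'_t$ by the $K$-rational holomorphic cohomology classes $\beta_s,\beta'_t$ in the bottom Hodge stage, one then invokes Harris's rationality results for Petersson inner products (cf.\ Proposition $3.19$ of \cite{harrisimrn}) to extract the $\sim_{E(\Pi)E(\Pi');K}$ equivalence. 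The powers $sp(j,\Pi;\Pi')$ and $sp(k,\Pi';\Pi)$ and the exact power of $2\pi i$ should emerge from the archimedean local factor, which must be computed explicitly and shown to be rational, and from the combinatorial bookkeeping of how the critical $m$ selects the relevant pair $(s,t)$.

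The main obstacle is twofold. Analytically, the Ichino-Ikeda conjecture is not yet established in the required generality, so the proof is conditional on it; moreover the precise archimedean computation (the analogue of the motivic combinatorics encoded by $T$ and $A$ in Proposition \ref{deligne period tensor product}) is delicate and must be done uniformly in $(s,t)$. Conceptually, the proof gives an unconditional automorphic statement yet leaves open its compatibility with Conjecture \ref{deligne conjecture}: that compatibility reduces, via Proposition \ref{comparison with GH}, to the identification $P^{(s)}(\Pi) \sim_{E(\Pi);K} Q^{(s)}(M(\Pi))$, i.e.\ to Relation 2.2, which depends on the Tate conjecture. Consequently the realistic target is an unconditional (modulo Ichino-Ikeda) proof of Conjecture \ref{Deligne automorphic 2}, with the motivic Conjecture \ref{Deligne automorphic} remaining a parallel statement whose equivalence is established only up to the period comparison of Section 2.4.
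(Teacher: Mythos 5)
The statement you have been asked to ``prove'' is Conjecture~\ref{Deligne automorphic 2}, and the paper itself offers \emph{no proof}: it is stated as an open conjecture, with only partial verification recorded in Theorem~\ref{main theorem} (the cases $n'=1$; $n>n'$ with different parities and a gap condition on infinity types; and $m=1$ with equal parities), proved by the Rankin--Selberg and doubling-type methods of \cite{harris97}, \cite{harrismotivic}, and \cite{linthesis}. So there is no paper proof to compare against, and your text should not be presented as a proof of the conjecture. What you have written is a conditional research program, and you say so yourself (``the key analytic input is the Ichino-Ikeda conjecture''), so at best it is a heuristic justification.

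Beyond that framing issue, your sketch misplaces the role of the Ichino--Ikeda conjecture relative to what the paper actually says. In the introduction, Ichino--Ikeda is invoked (in the announced Grobner--Harris--Lin work) to attack the \emph{motivic} period comparison (\ref{motive relation})/(\ref{compare of two periods 1}), i.e.\ the identification $P^{(s)}(\Pi)\sim_{E(\Pi);K}Q^{(s)}(M(\Pi))$ predicted by the Tate conjecture --- not to establish Conjecture~\ref{Deligne automorphic 2} itself. The known cases of Conjecture~\ref{Deligne automorphic 2} in Theorem~\ref{main theorem} are proved unconditionally, without Ichino--Ikeda. Moreover, an Ichino--Ikeda input would in the first instance only control the central value $L(1/2,\Pi\times\Pi')$, whereas Conjecture~\ref{Deligne automorphic 2} runs over the whole critical strip and the split indices $sp(j,\Pi;\Pi')$ encode precisely how the critical $m$ varies; your sketch gives no mechanism for getting from the central value to a general critical $m$, which is exactly the content of the archimedean/combinatorial bookkeeping in the genuine proofs of cases (1)--(3). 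In short: the statement is a conjecture, your argument is a conditional outline that does not match the methods behind the known cases, and it conflates the two distinct conjectural inputs (Ichino--Ikeda for the period comparison, versus direct automorphic period integrals for the special-value formula).
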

 
This conjecture is a purely automorphic statement. Moreover, it is proved by automorphic methods in many cases. We state some known results here.

\begin{thm}\label{main theorem}
We assume that both $\Pi$ and $\Pi'$ are very regular, i.e. the numbers $a_{i}-a_{i+1}\geq 3$ and $b_{j}-b_{j+1}\geq 3$ for all $1 \leq i\leq n-1$ and $1\leq j\leq n'-1$.

We may assume that $n\geq n'$. Conjecture \ref{Deligne automorphic 2} is true in the following cases:
\begin{enumerate}
\item $n'=1$, $\Pi'$ need not to be conjugate self-dual;
\item $n>n'$, $n$ and $n'$ have different parity, the numbers $-b_{j}$, $1\leq j \leq n'$ are in different gaps between $a_{1}>a_{2}>\cdots>a_{n}$. 
\item $m=1$, $n$ and $n'$ have the same parity.
\end{enumerate}

\end{thm}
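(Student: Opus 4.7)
The plan is to prove each of the three cases separately, via three different automorphic integral representations of $L(m,\Pi\times\Pi')$, and in each case to match the algebraic part of the integral with the period formula predicted by Conjecture \ref{Deligne automorphic 2}. The common strategy is: choose a signature $s$ on the $\Pi$-side and $s'$ on the $\Pi'$-side (dictated by $m$ and the split indices), express $L(m,\Pi\times\Pi')$ as a cohomological cup product involving the $K$-rational holomorphic classes $\beta_s$ and $\beta'_{s'}$, and apply Proposition \ref{comparison with GH} to recognize $P^{(s)}(\Pi)$ and $P^{(s')}(\Pi')$. The very regular hypothesis is used so that the archimedean zeta integrals are non-zero algebraic multiples of the expected Gamma factors, allowing the global L-value to be identified, modulo $(E(\Pi)E(\Pi'))^\times$, with its cohomological avatar.

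For Case~(1), $n'=1$, the plan is to invoke Harris's results from \cite{harris97}. Given a critical $m$, one lets $s$ be the unique integer such that $-b_1-\tfrac{\omega(\Pi)+\omega(\Pi')}{2}$ lies in the $s$-th gap of $a_1>\cdots>a_n$. On $GU_s$ one writes $L(m,\Pi\times\Pi')$ as a Petersson pairing of $\beta_s$ against an Eisenstein series restricted from a larger unitary similitude group via a modular embedding. The split indices reduce to $sp(j,\Pi;\Pi')=\delta_{j,s}$ and $sp(0,\Pi';\Pi)=s$, $sp(1,\Pi';\Pi)=n-s$, which after the Tate-twist bookkeeping recovers the conjectured formula.

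For Case~(2), the plan is to apply the doubling/Rankin--Selberg formula of \cite{harrismotivic}, in the simplified form of \cite{lincomptesrendus}. The hypothesis that $n$ and $n'$ have different parity, combined with the interlacing condition on the $-b_j$, guarantees that exactly one choice of signatures $(s,s')$ is compatible with the Hodge types on both sides, and that the archimedean zeta integral is non-vanishing. The L-value is then realized as a cup product of $\beta_s$ and $\beta'_{s'}$ with a coherent cohomology class representing the Eisenstein series, which contributes only rational factors. The powers $sp(j,\Pi;\Pi')$ and $sp(k,\Pi';\Pi)$ arise from the multiplicities with which $\beta_s$ and $\beta'_{s'}$ occur after the relevant branching.

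For Case~(3), $m=1$ and equal parity, the plan is to use a central-value formula of Ichino--Ikeda type as developed in \cite{linthesis}: the equal parity allows an embedding of unitary groups of signatures matching the split indices, and the central L-value factors through a Gan--Gross--Prasad-type period integral whose algebraic part is precisely a product of the Petersson norms defining $P^{(j)}(\Pi)$ and $P^{(k)}(\Pi')$. The main obstacle throughout is the archimedean zeta integral: in all three cases the very regular hypothesis is essential to ensure non-vanishing and rationality of the archimedean contribution, and in Case~(3) one must additionally ensure that the Ichino--Ikeda-type identity holds up to algebraic factors, which is the deepest ingredient and the place where the equal parity restriction is forced.
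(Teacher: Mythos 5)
The paper does not actually give a proof of Theorem \ref{main theorem}: the remark immediately following it simply cites the references (\cite{harris97} for case (1), \cite{harrismotivic} for the sub-case $n'=n-1$ of (2), and \cite{linthesis} for the general cases), and the introduction explicitly warns that ``due to limitation of space, some details and most proof are not provided in the automorphic part.'' So there is no in-text proof to compare your plan against. That said, your outline for cases (1) and (2) points at the right machinery and the right references: the doubling/Eisenstein approach of \cite{harris97} for $n'=1$, and the cohomological cup-product period on a product of unitary Shimura varieties of \cite{harrismotivic} (simplified in \cite{lincomptesrendus}) for the different-parity, interlacing case. Your emphasis on the very regular hypothesis controlling non-vanishing and rationality of the archimedean zeta integrals is also consonant with the cited sources.

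Your proposed proof of case (3) has a genuine gap. You want to deduce the $m=1$, equal-parity case from ``a central-value formula of Ichino--Ikeda type as developed in \cite{linthesis}.'' But the Ichino--Ikeda conjecture is not a theorem, and invoking it would make the result conditional, contradicting the unconditional assertion of Theorem \ref{main theorem}. Moreover, the introduction of the paper explicitly separates the two threads: it says that the identity (\ref{motive relation}) (Relation $2.2$, the identification $M_{s}(\Pi,\xi)\cong \Lambda^{n-s}(M(\Pi)^{\vee})\otimes M(\xi)$ predicted by the Tate conjecture) ``is still conjectural'' and that an \emph{ongoing} collaboration with Grobner and Harris ``is trying to prove this relation by assuming the Ichino--Ikeda conjecture.'' In other words, Ichino--Ikeda enters only as a hypothesis in unfinished work on a different statement, not as an ingredient of the proof of Theorem \ref{main theorem}. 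The proof of case (3) in \cite{linthesis} is a purely automorphic, unconditional argument (built from Rankin--Selberg period relations and base-change/induction techniques, not GGP central-value formulas); your plan needs to replace the Ichino--Ikeda input with such an argument, or it does not establish the theorem as stated. A minor terminological point: in the normalization of the paper $m=1$ is a near-central integer critical point (the center of the functional equation being $s=1/2$), not the central value itself, so calling it a ``central-value formula'' is already slightly off.
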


\bigskip

\begin{rem}
\begin{enumerate}
\item In the case $(2)$ and $(3)$ of the above theorem, we can get similar results for the other parity situation. However, more notation and the CM periods will be involved so we neglect them here. Details can be found in Chapter $10$ and $11$ of \cite{linthesis}.
\item For the proof, we refer to \cite{harris97} for the case $(1)$, \cite{harrismotivic} for the case $n'=n-1$ in case $(2)$ and \cite{linthesis} for general cases.
\item These results have been generalized to CM fields (c.f.\cite{guerberoffhecke} for case (1) and \cite{linthesis} for general cases). More precisely, let $F$ be a CM field and $\Pi$ be a certain cuspidal representation of $GL_{n}(\AF)$. One can still define automorphic periods for $\Pi$. We have similar results on critical values for tensor product of two such representations. The main difficulty of this generalisation is to show that the automorphic periods factorise through infinite places. This factorization generalises an important conjecture of Shimura, and is predicted by motivic calculation. The proof can be found in the thesis and a forthcoming paper of the author.
\item One application of the above theorem is the functoriality of automorphic periods. For example, let $F/L$ be a cyclic extension of CM fields. We consider the base change of $GL_{n}$ with respect to $L/F$. If $\Pi$, a certain cuspidal representation of $GL_{n}(\AL)$, descends to $\pi$, a certain cuspidal representation of $GL_{n}(\AF)$ by base change, then there are relations between automorphic periods for $\Pi$ and those for $\pi$. We refer to Chapter $8$ of \cite{linthesis} for the details.
\end{enumerate}

\end{rem}

\bibliography{bibfile}

\begin{thebibliography}{Har13b}

\bibitem[Art03]{arthurtraceformula}
J.~Arthur.
\newblock An introduction to the trace formula.
\newblock In R.~Kottwitz J.~Arthur, D.~Ellwood, editor, {\em Harmonic analysis,
  the trace formula and {S}himura varieties}, volume~4 of {\em Clay Mathematics
  Proceedings}, pages 1--264. American Mathematical Society Clay Mathematics
  Institute, 2003.

\bibitem[Clo90]{clozelaa}
L.~Clozel.
\newblock Motifs et formes automorphes: applications du principe de
  fonctorialit{\'e}.
\newblock In {\em Automorphic forms, $S$himura varieties, and $L$-functions},
  volume~1 of {\em Perspectives in Mathematics}, pages 77--159. Boston, MA:
  Academic Press, 1990.

\bibitem[Clo91]{clozelIHES}
L.~Clozel.
\newblock Rerpr{\'e}sentations galoisiennes associ{\'e}es aux
  repr{\'e}sentations automorphes autoduales de {G}l(n).
\newblock {\em Publications math{\'e}matiques de l'I.H.E.S}, 73:97--145, 1991.

\bibitem[Clo12]{clozelramanujan}
L.~Clozel.
\newblock Purity reigns supreme.
\newblock {\em International Math. Research Notices}, 2012.

\bibitem[Del79]{deligne79}
P.~Deligne.
\newblock Valeurs de fonctions {L} et p\'{e}riodes d'int\'{e}grales.
\newblock In W.~Casselman A.~Borel, editor, {\em Automorphic forms,
  representations and {L}-functions}, volume~33 of {\em Proceedings of the
  Symposium in Pure Mathematics of the {A}merican Mathematical Society}.
  American Mathematical Society, 1979.

\bibitem[GH15]{harrismotivic}
H.~Grobner and M.~Harris.
\newblock Whittaker periods, motivic periods, and special values of tensor
  product of {L}-functions.
\newblock {\em Journal of the Institute of Mathematics of Jussieu}, 2015.

\bibitem[Gue15]{guerberoffhecke}
L.~Guerberoff.
\newblock Automorphic motives for unitary groups and periods relations.
\newblock December 2015.

\bibitem[Har97]{harris97}
M.~Harris.
\newblock {L}-functions and periods of polarized regular motives.
\newblock {\em J. Reine Angew. Math}, (483):75--161, 1997.

\bibitem[Har10]{harristakagi}
M.~Harris.
\newblock Arithmetic applications of the langlands program.
\newblock {\em Japanese Journal of Mathematics}, (5):1--71, 2010.

\bibitem[Har13a]{harrisimrn}
M.~Harris.
\newblock Beilinson-bernstein localization over $\mathcal{Q}$ and periods of
  automorphic forms.
\newblock {\em International Math. Research Notices}, pages 2000--2053, 2013.

\bibitem[Har13b]{harrisadjoint}
M.~Harris.
\newblock {L}-functions and periods of adjoint motives.
\newblock {\em Algebra and Number Theory}, (7):117--155, 2013.

\bibitem[HK91]{harrisappendix}
M.~Harris and S.~S. Kudla.
\newblock The central critical value of the triple product {L}-functions.
\newblock {\em Annals of {M}athematics, Second Series}, 133(3):605--672, 1991.

\bibitem[HL04]{harrislabesse}
M.~Harris and J.~P. Labesse.
\newblock Conditional base change for unitary groups.
\newblock {\em Asian J. Math.}, 8(4):653--684, 2004.

\bibitem[HT01]{harristaylor}
M.~Harris and R.~Taylor.
\newblock {\em The geometry and cohomology of some simple {S}himura varieties}.
\newblock Number 151 in Annals of Mathematics Studies. Princeton University
  Press, 2001.

\bibitem[Lin15a]{lincomptesrendus}
J.~Lin.
\newblock Period relations for automorphic induction and applications, {I}.
\newblock {\em Comptes Rendus Math{\'e}matiques}, 353, 2015.

\bibitem[Lin15b]{linthesis}
J.~Lin.
\newblock {\em Special values of automorphic {L}-functions for $GL_{n}\times
  GL_{n'}$ over {CM} fields, factorization and functoriality of arithmetic
  automorphic periods}.
\newblock PhD thesis, Universit{\'e} Paris 7, 2015.

\bibitem[Min11]{minguez}
A.~Minguez.
\newblock Unramified representations of unitary groups.
\newblock In J.~P. Labesse B. C.~Ng{\^o} L.~Clozel, M.~Harris, editor, {\em On
  the stabilization of the trace formnula}, volume~1. International Press,
  2011.

\end{thebibliography}
\bibliographystyle{alpha}

\end{document}